\newtheorem{definition}{Definition}
\newtheorem{remark}{Remark}
\newtheorem{theorem}{Theorem}
\newtheorem{lemma}{Lemma}
\newtheorem{proposition}{Proposition}
\DeclareMathOperator*{\argmax}{arg\,max}
\newcommand{\vertiff}{\rotatebox{90}{$\Leftrightarrow$}}
\begin{document}

\begin{frontmatter}
\title{On the classification problem for Poisson Point Processes}

\author{Alejandro Cholaquidis$^*$}
\author{Liliana Forzani$^\dag$}
\author{Pamela Llop$^{\dag \ddagger}$}
\author{Leonardo Moreno$^*$}

\address{$^*$Centro de Matem\'atica, Facultad de Ciencias (UdelaR)}
\address{$^{\dag}$Facultad de Ingenier\'ia Qu\'imica (UNL)}
\address{$^{\ddagger}$Instituto de Matem\'atica Aplicada del Litoral (UNL - CONICET)}

\begin{abstract}
For Poisson processes taking values in any general metric space, we tackle the problem of supervised classification in two different ways: via the classical $k$-nearest neighbor rule, by introducing suitable distances between patterns of points and via the Bayes rule, by estimating nonparametricaly the intensity function of the process. In the first approach we prove that, under the separability of the space the rule turns out to be  consistent. In the former, we prove the consistency of rule by proving the consistency of the estimated intensities. Both classifiers have shown to have a good behaviour under departures from the Poisson distribution. 
\end{abstract}

\begin{keyword}[class=MSC]
{60G55}\sep {62G05} \sep {62G08}
\end{keyword}

\begin{keyword}
{point process}\sep {Poisson process} \sep {nonparametric estimation}\sep
{classification}
\end{keyword}
\end{frontmatter}

\section{Introduction}\label{intro}
Spatial point processes are commonly used to model the spatial structure of points formed by the location of individuals in space.  The growing interest in this kind of process is related to the wide range of areas where they can be applied. For instance, in ecology, they can be used to model the distribution of herds of animals, the spreading of nests of birds, speckles of trees or plants or the eroded areas in rivers or seas. In geography, the position of earthquakes or volcanoes can by modelled by this kind of processes. They can be also used to model the distribution of galaxies in astronomy, the locations of subscribers in telecommunications, among others. There exists a vast literature on this area, just to name a few, we refer to the recent book \textit{Spatial Data analysis in ecology and agriculture using R} (\cite{plant}), which contains many other possible applications and techniques, as well as real data examples. In \cite{Illian09}, the authors propose a hierarchical modelling of the interaction structure in the plant community. The current interest on this kind of process also appears in connection with the new developments in Functional Neuroimaging techniques (for example fMRI), where it is possible to record in real time the location of the activation zones of the brain (see for instance \cite{kang11,kang14}, and  \cite{Yarkoni10}). In this context, in order to perform classification between healthy and unhealthy people, the differences between the neurons that fire under some stimuli can be measured by modelling them as spatial Poisson processes with different intensities. In \cite{mateu15}, the authors do a review of several distances used to measure the differences between two spatial patterns in order to perform clustering or classification (see also \cite{victor:97}). In a different application area, crime modelling and mapping using geospatial technologies (which include the use of spatial point process) is, quoting \cite{leitner}, ``a topic of much interest mostly to academia, but also to the private sector and the government'', see also \cite{handcrim} and \cite{gervini}. On this topic in Section \ref{realdata} we study the spatial distribution of three different crimes which took place in Chicago between 2014 and 2016, by using an open access database containing, among other variables, the spatial location of the crimes.

\

The aim of this manuscript is to tackle the supervised classification pro\-blem for Poisson point processes by framing it in the functional data setting. In particular, we prove the consistency of the $k$-nearest neighbors classifier in a more general context by proving the separability of the space and the Besicovitch condition (see \cite{cer06} and \cite{ffl:12} for a deepest reading on this topic). Via some simulation studies, we show how different choices of distances lead to different results on the classification. In addition, following the ideas in \cite{diggle}, we also propose a nonparametric estimator of the intensity function, prove its consistency and plug it into the Bayes rule to get a consistent classifier. This last approach is similar to the one proposed in \cite{kang14} but we do not assume that the intensities vary in a parametric family. Through some simulation studies we show the good performance of the $k$-NN rule so that it can be considered as an easier to implement alternative to the classical Bayes. More precisely, the $k$-NN classifier does not require the estimation of the intensity function (which is computationally expensive) and it can be employed in more general settings. With regard to the last statement, it is important to highlight that, although most of the classical applications of spatial point processes are for recorded locations in $\mathbb{R}^2$ or  $\mathbb{R}^3$, we do not restrict our approach to that case, allowing the realizations of the processes to live in a general metric space (as functional metric spaces, Riemannian manifolds, among others).

\

The manuscript is outlined as follows: in Section \ref{defnot} we present definitions and preliminary results that we will use throughout the work. Section \ref{secc:intensidad} is devoted to introduce an estimator of the intensity of the process in order to plug it in the Bayes rule an prove its consistency. In Section \ref{secc:besi} we handle the problem of choosing a suitable distance to guarantee the separability of the space and the Besicovitch condition, in order to get the consistency of the $k$-NN estimator. Section \ref{pqbesi} is devoted to the study of the metric dimension of the space introduced in the Section \ref{secc:besi}. In Section \ref{gibss} we extend the results to a more general class of processes: the Gibss processes. In Section \ref{simu} we perform some simulation studies in order to asses the performance of the classification rules for different scenarios as well as to see the effect of changing some parameters in the estimation and robustness when the model is not Poisson. Finally in Section \ref{realdata} we perform classification in a real data scenario. All the proofs are given in the Appendix.

\section{Definitions and preliminary results} \label{defnot}

This section is devoted to introduce some definitions and tools we will use throughout the  paper. We will start with the definition of the main object of this paper, the Poisson point process and then we will turn to classification rules in our context. For a deeper read on Poisson processes we refer to  \cite{gg:10}, \cite{k:93} and \cite{mw04}.

\subsection{Poisson process}\label{pp}
Let $(S,\rho)$ be a separable and bounded set metric space, endowed with a Borel measure $\nu$, let us denote by $\mathcal{B}(S)$ the Borel $\sigma$-algebra on $S$ and by $S^\infty$ the set of elements (subsets) $x$ of $S$ whose cardinal, $\# x$, is finite. This is, 
\[ 
S^\infty \doteq \{ x\subset S: \# x < \infty \}.  
\]
Let $\lambda:S\rightarrow \mathbb{R}^+$ be an integrable function. Given a probability space $(\Omega,\mathcal{A},\mathbb{P})$,  we will say that a function $X:\Omega\rightarrow S^\infty$ is a \textbf{Poisson process} on $S$ with intensity $\lambda$ (we will denote $X\sim Poisson(S,\lambda)$) if:
\begin{itemize} 
\item the functions $N_A:\Omega\rightarrow \{0,\dots,\infty\}$ defined as $N_A(\omega)=\#(X(\omega)\cap A)$ are random variables for all $A\in \mathcal{B}(S)$;
\item given $n$ disjoint Borel subsets $A_1,\dots,A_n$ of $S$, the random variables $N_{A_1},\dots,N_{A_n}$ are independent;
\item $N_A$ follows a Poisson process with mean $\mu(A)$ (we will denote $N_A\sim \mathcal{P}(\mu(A))$, being $$\mu(A)=\int_A \lambda(\zeta)d\nu(\zeta).$$
\end{itemize}

Let $\mathcal{S}^\infty= 2^{{S}^\infty}$ be the $\sigma$-algebra of part of $S^\infty$. If  $X$ is a Poisson process, the distribution $P_X$ of $X$ on $\mathcal{S}^\infty$ is defined as $P_X(B)=\mathbb{P}(X\in B)$ for $B \in {\mathcal S}^\infty$.

\

A well-known result (see \cite{mw04}) on point processes states that, if $X_1$ and $X_2$ are Poisson processes with intensity $\lambda_1$ and $\lambda_2$, respectively, with values on a non-empty bounded metric space $(S,\rho)$ such that $\mu_i(S)<\infty$, $i=1,2$, the distribution of $X_1$ is absolutely conti\-nuous with respect to the distribution of $X_2$ ($P_{X_1}\ll P_{X_2}$) with Radon Nikodym derivative
\[
f_{X_1}(x)=\exp\Big[ \mu_2(S)-\mu_1(S)\Big] \prod_{\xi \in x} \frac{\lambda_1(\xi)}{\lambda_2(\xi)},
\]  
with $0/0=0$. As a consequence observe that if $X_2\sim Poisson(S,1)$ then, for all $X\sim Poisson(S,\lambda)$, $P_{X}\ll P_{X_2}$ and 
\begin{equation} \label{eqdens2}
f_X(x)=\exp\Big[ \nu(S)-\mu(S)\Big] \prod_{\xi \in x} \lambda(\xi),
\end{equation}  
where $\mu(S)=\int_S \lambda d\nu$. 

\subsection{Classification} \label{clasifbayes}
Given a set $\{(X_i,Y_i)\}_{i=1}^n \in S^\infty \times \{0,\ldots,M\}$ of iid pares with the same distribution as $(X,Y)$, the aim of classification is, given a new observation $X$,  to predict the class $Y$ to which $X$ belongs. In this context, a classification rule is a measurable function $g:S^\infty \rightarrow \{0,\ldots,M\}$ which, for a new observation $X$, returns a  label $Y\in  \{0,\ldots,M\}$. It was shown (see, e.g., \cite{dev96}) that the optimal classifier is the \textbf{Bayes rule} $ g^*$ which minimizes the probability of error or, equivalentely, which maximizes the posterior probabilities:
\[
 g^*(x)=\argmax_{g:S^\infty \rightarrow \{0,\ldots,M\}} {\mathbb P}(g(X)\neq Y).
\]
The mimimun probability  of error $L^*={\mathbb P}(g^*(X)\neq Y)$ is known as \textbf{Bayes error}. If $L_n={\mathbb P}(g_n(X)\neq Y|D_n)$ is the probability of error of a sequence $g_n$ of classifiers built up from a training sample, it is said that the sequence is \textbf{weakly consistent} if $L_n$ converges in probability to $L^*$ as $n\to\infty$.

\

In our context, we assume that $X$ conditioned to $Y$ has Poisson distribution therefore, following (\ref{eqdens2}), in Lemma \ref{bayes} we obtain an expression for the Bayes rule as a function of the intensities of the processes. 
\begin{lemma}\label{bayes}
Let $(X,Y)\in S^\infty \times \{0,\ldots,M\}$. Let $X_j\doteq X|Y=j$ be Poisson processes on $S^\infty$ with intensities $\lambda_j$, $j=1,\ldots,M$, respectively. Therefore, the Bayes rule classifies a point $x\in S^\infty$ in class $j$ if 
\begin{equation} \label{eqclasif}
\exp\Big[\mu_i(S)-\mu_j(S)\Big]\prod_{\xi \in x}  \frac{\lambda_j(\xi)}{ \lambda_i(\xi)} > \frac{p_i}{p_j}, \quad \forall \, i\neq j, 
\end{equation}
where  $p_i=\mathbb{P}(Y=i)$, $i=1,\ldots,M$ and as before, $\mu_i(S)=\int_S \lambda_i(\zeta)d\nu(\zeta)$, $i=1,\ldots,M$.
\end{lemma}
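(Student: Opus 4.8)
The plan is to start from the defining property of the Bayes classifier, namely that it assigns to $x$ the label maximizing the posterior probability $\mathbb{P}(Y=j\mid X=x)$, and to translate this maximization into the stated inequality (\ref{eqclasif}) using the explicit Poisson densities. The entire argument is a direct substitution; the genuine mathematical content is already contained in the density formula (\ref{eqdens2}), so the task is essentially algebraic bookkeeping.

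First I would fix a common dominating measure. Since each $X_j$ is a Poisson process with finite total intensity $\mu_j(S)<\infty$, the absolute-continuity result quoted just above the lemma guarantees that $P_{X_j}\ll P_{X_2}$ for $X_2\sim Poisson(S,1)$, with Radon--Nikodym derivative given by (\ref{eqdens2}),
\[
f_{X_j}(x)=\exp\big[\nu(S)-\mu_j(S)\big]\prod_{\xi\in x}\lambda_j(\xi).
\]
Having all class-conditional laws dominated by the single reference measure $P_{X_2}$ is precisely what makes the posterior expressible through densities, so that Bayes' theorem applies in the usual form.

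Next I would apply Bayes' theorem. Writing $p_j=\mathbb{P}(Y=j)$, the posterior probability is
\[
\mathbb{P}(Y=j\mid X=x)=\frac{p_j\,f_{X_j}(x)}{\sum_{k}p_k\,f_{X_k}(x)},
\]
so the denominator is independent of $j$ and the Bayes rule reduces to choosing the class $j$ that maximizes $p_j f_{X_j}(x)$. Equivalently, it assigns $x$ to class $j$ whenever the pairwise inequality $p_j f_{X_j}(x)>p_i f_{X_i}(x)$ holds for every $i\neq j$.

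Finally I would substitute the density formula into this pairwise comparison. Inserting the expressions for $f_{X_j}$ and $f_{X_i}$, the common factor $\exp[\nu(S)]$ cancels; dividing through by $p_i\exp[-\mu_i(S)]\prod_{\xi\in x}\lambda_i(\xi)$ and rearranging turns $p_j f_{X_j}(x)>p_i f_{X_i}(x)$ into exactly
\[
\exp\big[\mu_i(S)-\mu_j(S)\big]\prod_{\xi\in x}\frac{\lambda_j(\xi)}{\lambda_i(\xi)}>\frac{p_i}{p_j},
\]
which is (\ref{eqclasif}). I do not anticipate a genuine obstacle here; the only point requiring care, rather than difficulty, is the handling of the convention $0/0=0$ in the product of intensity ratios, so that the rearrangement remains meaningful on the event where some $\lambda_i(\xi)$ vanishes. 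Aside from that, the statement follows immediately from combining Bayes' theorem with the already-established Radon--Nikodym derivative.
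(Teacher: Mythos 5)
Your proposal is correct and follows essentially the same route as the paper's own proof: both express the posterior via densities with respect to the unit-intensity Poisson process, reduce the Bayes rule to the pairwise comparison $p_j f_{X_j}(x) > p_i f_{X_i}(x)$, and substitute the Radon--Nikodym formula (\ref{eqdens2}) to obtain (\ref{eqclasif}). The only (minor) difference is that you explicitly flag the $0/0=0$ convention, which the paper's proof passes over in silence.
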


Observe that, in order to apply the Bayes rule we will need to estimate the intensities $\lambda_j$ of the processes which will be done in Section \ref{secc:intensidad}.

\

Another well known classification rule is the $\bm k$\textbf{-nearest neighbor} rule which, in our context will classify a point $x\in S^\infty$ in class $j$ if,  for all $i\neq j$,
\[
\sum_{k=1}^n w_{nk} {\mathbb I}_{\{Y_k = j\}} > \sum_{k=1}^n w_{nk} {\mathbb I}_{\{Y_k = i\}},
\] 
where the weights $w_{nk}$ are $1/k$ for the $k$-nearest neighbors of $x$ and $0$ elsewhere. We say that $X_i$ is the $k$-nearest neighbor  $X$ among $\{X_1,\ldots, X_n\}$ if the distance $d(X_i,X)$ is the $k$-th smallest among $d(X_1,X), d(X_2, X), \ldots, d(X_n,X)$. 

\

For random variables taking values in a finite dimensional space (for instance ${\mathbb R}^d$), it is well-known (see \cite{sto77}) that the $k$-NN rule is $L^2$-universally consistent  provided that $k\to\infty$ and $k/n\rightarrow 0$. However, when they take values in infinite dimensional spaces (as in this case), the consistency is not necessarily true (even weakly than $L^2$) as it was studied by \cite{cer06}. Nevertheless, \cite{ffl:12} gave sufficient conditions to ensure $L^2$-consistency  of the classical estimators of the regression function $\eta(x)=\mathbb{E}(Y|X=x)$. That conditions are the separability of the metric space $(S^\infty,d)$ for a given metric $d$ and \textit{Besicovitch condition}, which can be stated as:  
\begin{equation} \label{besicovitch2}
\lim_{\epsilon \rightarrow 0} P_X \Big\{ x: \lim_{\delta\rightarrow 0} \frac{1}{P_X \big(B_d (x,\delta)\big)}
\int_{B_d(x,\delta)} |\eta(x)-\eta(y)|d P_X (y) > \epsilon \Big\} =0,
\end{equation}
 It is immediate that $P_X$-a.s. continuity of $\eta$ is a sufficient condition for (\ref{besicovitch2}).
In order to get the consistency of the $k$-NN rule in the context of Poisson processes, we will study  in Section \ref{secc:besi} the problem of choosing a suitable distance $d$ which leads the separability of the space $(S^\infty,d)$ and the Besicovitch condition for  $\eta(x)$.

\section{Main Results} \label{ALE}
Throughout all this section we will assume that $(S,\rho)$ is a separable compact metric space.

\subsection{Bayes rule in the context of Poisson processes: consistency and the estimation of the intensity}\label{secc:intensidad}

In this section we propose to estimate nonparametrically the intensity functions $\lambda_j$ $j=1,\ldots, M$ in order to plug in them in Equation (\ref{eqclasif}) to get the Bayes rule for Poisson processes. Following \cite{diggle}, given a realization $\{\xi_1,\dots,\xi_n\}$ of the process $X$ with values in $S^\infty$, we estimate the intensity $\lambda(\zeta)$ of $X$ in the point $\zeta \in S$ as
\begin{equation} \label{estint}
\hat{\lambda}(\zeta)=\frac{1}{K_\sigma(\zeta)}\sum_{i=1}^n \frac{1}{\sigma^d} k\Bigg(\frac{\rho(\zeta,\xi_i)}{\sigma}\Bigg),
\end{equation}
with $k:\mathbb{R}\rightarrow \mathbb{R}^+$ a symmetric, no negative kernel, $\sigma>0$ a smoothing parameter and
$$
K_\sigma(\zeta)=\int_S \frac{1}{\sigma^d}k\Bigg(\frac{\rho(\zeta,\xi)}{\sigma}\Bigg)d\nu(\xi).
$$
Given a random sample of Poisson processes $X_1,\dots,X_m$, each with realization  $X_j=\{\xi_1,\dots,\xi_{n(j)}\}$, $j=1,\dots,m$, we define an estimator of the intensity $\lambda(\zeta)$ by
\begin{equation} \label{estint1}
\hat{\hat{\lambda}}_m(\zeta)=\frac{1}{m}\sum_{j=1}^m \hat{\lambda}_j(\zeta),
\end{equation}
where $\hat{\lambda}_j(\zeta)$ is an estimation of $\lambda(\zeta)$ as given in (\ref{estint}) for the realization $X_j=\{\xi_1,\dots,\xi_{n(j)}\}$, $j=1,\dots,m$. 

{Via simulation study (see Section \ref{sec:efectoh}), we observed that when performing classification by using the Bayes rule as in (\ref{eqclasif}) with estimated intensities (\ref{estint1}) the best choice is $\sigma_1=\sigma_2$.}

In the following theorem we show the consistency of the estimator  given in (\ref{estint1}). 
\begin{theorem}\label{consistencia} Let us assume that the intensity function {$\lambda$} is continuous and that, for all $\zeta \in S$, there exists $\lambda_0>0$ such that $\lambda(\zeta)\geq \lambda_0$. Let $k:\mathbb{R}\rightarrow \mathbb{R}^{+}$ be a symmetric, no negative continuous kernel such that $supp(k)\subset [0,diam(S)]$  and $k(x)>k_0>0$ $\forall x\in [0,diam(S)]$. Then, for almost all $\zeta\in S$ (w.r.t. $\nu$), there exists $\sigma_m(\zeta)\rightarrow 0$, such that
\begin{equation} \label{eqth2}
\lim_{m\rightarrow \infty} \Big|\hat{\hat{\lambda}}_m(\zeta)-\lambda(\zeta)\Big|=0 \quad \text{ a.s.}
\end{equation}
\end{theorem}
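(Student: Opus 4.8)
The plan is to split $\hat{\hat{\lambda}}_m(\zeta)-\lambda(\zeta)$ into a deterministic bias term and a centred stochastic term, controlling each while choosing the bandwidth $\sigma=\sigma_m$ as a function of $m$ (and of $\zeta$). The two basic facts I would use are the first two moments of a linear statistic of a Poisson process: writing $g_\sigma(\eta)=\sigma^{-d}k(\rho(\zeta,\eta)/\sigma)$, Campbell's formula for $X\sim Poisson(S,\lambda)$ gives
\[
\mathbb{E}\Big[\sum_{\xi\in X}g_\sigma(\xi)\Big]=\int_S g_\sigma\,\lambda\,d\nu,\qquad
\mathrm{Var}\Big[\sum_{\xi\in X}g_\sigma(\xi)\Big]=\int_S g_\sigma^2\,\lambda\,d\nu .
\]
Since the $m$ realizations are independent, $\mathbb{E}[\hat{\hat{\lambda}}_m(\zeta)]=K_\sigma(\zeta)^{-1}\int_S g_\sigma\lambda\,d\nu$ and $\mathrm{Var}[\hat{\hat{\lambda}}_m(\zeta)]=m^{-1}K_\sigma(\zeta)^{-2}\int_S g_\sigma^2\lambda\,d\nu$; equivalently, by the superposition theorem the numerator averaged over the $m$ samples is a single Poisson integral $\int g_\sigma\,dX^{(m)}$ against intensity $m\lambda$, which I will exploit for the concentration step.

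For the bias I would note that, because $supp(k)\subset[0,\mathrm{diam}(S)]$, the function $g_\sigma$ is supported on the $\rho$-ball $B(\zeta,\sigma\,\mathrm{diam}(S))$ and strictly positive there (as $k\ge k_0$), so
\[
\mathbb{E}[\hat{\hat{\lambda}}_m(\zeta)]=\frac{\int_{B(\zeta,\sigma\,\mathrm{diam}(S))}k(\rho/\sigma)\,\lambda\,d\nu}{\int_{B(\zeta,\sigma\,\mathrm{diam}(S))}k(\rho/\sigma)\,d\nu}
\]
is a genuine weighted average of $\lambda$ over that ball, hence lies between its infimum and supremum there. Continuity of $\lambda$ then forces $\mathbb{E}[\hat{\hat{\lambda}}_m(\zeta)]\to\lambda(\zeta)$ as $\sigma\to0$, for every $\zeta$. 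For the variance I would use $k_0\le k\le \|k\|_\infty$ on the support together with $\lambda_0\le\lambda\le\|\lambda\|_\infty$ (both finite, since $\lambda$ is continuous on the compact $S$) to bound the numerator and denominator by multiples of $\phi(\sigma):=\nu\big(B(\zeta,\sigma\,\mathrm{diam}(S))\big)$, obtaining
\[
\mathrm{Var}[\hat{\hat{\lambda}}_m(\zeta)]\le \frac{C(\zeta)}{m\,\phi(\sigma)} .
\]

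It remains to upgrade mean-square convergence to almost sure convergence, and here lies the main obstacle: since $\sigma_m\to0$ forces $\phi(\sigma_m)\to0$, a direct Chebyshev--Borel--Cantelli bound is never summable (the variances dominate $\sum m^{-1}$). I would resolve this by exploiting the exponential concentration of Poisson integrals: applying a Bernstein-type inequality to the single Poisson functional $\int g_\sigma\,dX^{(m)}$ of the superposition and inserting $K_\sigma\gtrsim \sigma^{-d}\phi(\sigma)$ and $\|g_\sigma\|_\infty\le \|k\|_\infty\sigma^{-d}$ yields
\[
\mathbb{P}\big(|\hat{\hat{\lambda}}_m(\zeta)-\mathbb{E}\hat{\hat{\lambda}}_m(\zeta)|>\epsilon\big)\le 2\exp\big(-c\,\epsilon^2\,m\,\phi(\sigma_m)\big).
\]
Choosing $\sigma_m\to0$ so that $\phi(\sigma_m)\asymp (\log^2 m)/m$ (possible for $\nu$-a.e. $\zeta$, namely those in the support of $\nu$ at which $\phi(\cdot)\to0$, since $\phi$ is nondecreasing with $\phi(0^+)=0$) makes $m\,\phi(\sigma_m)\gg\log m$, so the right-hand side is summable in $m$ for every fixed $\epsilon>0$. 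Borel--Cantelli then gives $\hat{\hat{\lambda}}_m(\zeta)-\mathbb{E}\hat{\hat{\lambda}}_m(\zeta)\to0$ a.s., and combining this with the vanishing bias yields (\ref{eqth2}). Alternatively, the same choice of $\sigma_m$ made piecewise constant along a subsequence $m_k=k^2$ lets one replace the Poisson-specific bound by Kolmogorov's maximal inequality within blocks, using only the second-moment bound above. The delicate points to get right are the measure-theoretic selection of $\sigma_m(\zeta)$ valid for $\nu$-a.e.\ $\zeta$ and the uniformity in $\epsilon$ of the tail bound.
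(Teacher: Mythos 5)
Your proposal is correct and shares the paper's overall architecture -- the bias/variance split, an exponential tail bound for the stochastic term, a small-ball choice of $\sigma_m(\zeta)$ for $\nu$-a.e.\ $\zeta$, and Borel--Cantelli -- but it treats the stochastic term by a genuinely different device. The paper conditions on the cardinalities $\#X_1,\dots,\#X_m$, applies Hoeffding's inequality to the bounded summands $\hat{\lambda}_j(\zeta)\in[0,K_1\,\#X_j/\gamma_m(\zeta)]$, and then de-conditions by an iterated Jensen-type argument to replace $\sum_j(\#X_j)^2$ by $m\,\mu(S)(1+\mu(S))$, arriving at a bound of order $\exp\bigl(-c\,\epsilon^2\,m\,\gamma_m(\zeta)^2\bigr)$ with $\gamma_m(\zeta)=\nu(B_\rho(\zeta,\sigma_m))$; it then invokes Lemma A2 of the Forzani--Fraiman--Llop paper to pick $\sigma_m(\zeta)\to0$ with $\gamma_m(\zeta)\geq \log(m)/\sqrt{m}$. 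You instead superpose the $m$ realizations into a single Poisson process of intensity $m\lambda$ and apply a Bernstein/Chernoff inequality for Poisson integrals (which follows from the explicit Laplace functional $\mathbb{E}\,e^{\theta\int g\,dN}=\exp\bigl(\int(e^{\theta g}-1)\,d\Lambda\bigr)$ -- you should at least cite or state this, since you only assert the bound), obtaining an exponent of order $\epsilon^2\,m\,\phi(\sigma_m)$ rather than $\epsilon^2\,m\,\phi(\sigma_m)^2$. Your route buys two things: it bypasses the paper's de-conditioning step (which rests on concavity of $x\mapsto e^{-a/(b+x)}$, valid only on part of its range, and is the most delicate point of the published argument), and its weaker requirement $\phi(\sigma_m)\gtrsim \log^2(m)/m$ versus the paper's $\gamma_m\gtrsim\log(m)/\sqrt{m}$ admits faster-shrinking bandwidths; the paper's route, in exchange, uses only the elementary Hoeffding inequality plus the Poisson moments $\mathbb{E}(\#X_j)^2=\mu(S)(1+\mu(S))$. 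Your bias argument and your measurable selection of $\sigma_m(\zeta)$ on $\mathrm{supp}(\nu)$ coincide with the paper's (the latter is exactly what Lemma A2 of the cited reference formalizes, including the right-continuity of $\sigma\mapsto\nu(B(\zeta,\sigma))$ that your infimum construction implicitly needs). Your secondary suggestion -- Chebyshev plus Kolmogorov blocking along $m_k=k^2$ -- does not work with the bandwidth choice $\phi(\sigma_m)\asymp\log^2(m)/m$, since then $\mathrm{Var}(\hat{\hat{\lambda}}_{m_k})\asymp 1/\log^2 k$ is not summable; it would require a substantially larger $\phi(\sigma_m)$ (e.g.\ $m^{-1+\alpha}$) and a block-constant bandwidth, so the exponential inequality is the right main route.
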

\begin{remark} It is easy to see that if $S\subset\mathbb{R}^d$, $\nu$ is absolutely continuous w.r.t. the Lebesgue measure, the density is bounded away from 0 and $S$ is standard (see Definition 1 in \cite{cue04}), then $(\log(m)/m)^{1/(2d)}/\sigma_m\rightarrow 0$ and $\sigma_m\rightarrow 0$ is enough to get \eqref{eqth2}.%since in that case $\gamma_m\sim \sigma_m^d$. 
\end{remark}
For a recent review on the estimation of the intensity function for general point process see \cite{vliesh:12}.

\subsection{$k$-NN rule in the context of Poisson process: consistency}\label{secc:besi}

As we said in Section \ref{clasifbayes}, in order to get the separability of $S^\infty$ as well as the Besicovitch condition, we need to chose a suitable distance. Since the elements of $S^\infty$ are subsets of $(S,\rho)$, a quite natural choice is  the Hausdorff distance, which measure how far two subsets of a metric space are from each other. It is defined as follows.

\begin{definition} 
Given two non-empty compact sets $A,C\subset S$, the \textbf{Hausdorff distance} between them is defined by
\[
d_H(A,C)=\max\Big\{\sup_{a\in A} d(a,C),\ \sup_{c\in C}d(c,A)\Big\},
\]
where $d(a,C)=\inf\{\rho(a,c):c\in C\}$. 
\end{definition}
\begin{remark}
Observe that, when $S$ is bounded the metric $d_H$ on $S^\infty$ is well defined since in this case $\#x<\infty$ for all $x\in S^\infty$. 
\end{remark}
In the following two propositions we state the sufficient conditions to get the $L^2$-consistency of the $k$-nearest neighbor rule in $(S^\infty,d_H)$. 
\begin{proposition} \label{separ} 
The space $(S^\infty,d_H)$ is separable. 
\end{proposition}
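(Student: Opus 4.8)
The plan is to exhibit an explicit countable dense subset of $(S^\infty,d_H)$, constructed from a countable dense subset of the underlying space. Since $(S,\rho)$ is separable, fix a countable dense set $D=\{s_1,s_2,\dots\}\subset S$. The natural candidate for a dense subset of $S^\infty$ is the collection $\mathcal{D}$ of all finite subsets of $D$, that is, $\mathcal{D}=\{F\subset D:\#F<\infty\}$. First I would check that $\mathcal{D}$ is countable: it is the countable union over $n$ of the families of $n$-element subsets of the countable set $D$, and each such family is countable, so $\mathcal{D}$ is countable.

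The core of the argument is to show that $\mathcal{D}$ is dense. Given an arbitrary $x=\{\xi_1,\dots,\xi_n\}\in S^\infty$ and $\epsilon>0$, I would use the density of $D$ in $S$ to choose, for each $i$, a point $s_{k_i}\in D$ with $\rho(\xi_i,s_{k_i})<\epsilon$, and then set $F=\{s_{k_1},\dots,s_{k_n}\}\in\mathcal{D}$. It then remains to verify that $d_H(x,F)<\epsilon$, which reduces to bounding the two one-sided suprema in the definition of $d_H$. For the first, every $\xi_i\in x$ satisfies $d(\xi_i,F)\le \rho(\xi_i,s_{k_i})<\epsilon$, so $\sup_{a\in x}d(a,F)<\epsilon$; for the second, every element of $F$ is some $s_{k_i}$ and satisfies $d(s_{k_i},x)\le \rho(s_{k_i},\xi_i)<\epsilon$, so $\sup_{c\in F}d(c,x)<\epsilon$. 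Taking the maximum yields $d_H(x,F)<\epsilon$, completing the density argument.

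There is no serious obstacle here; the only point requiring a little care is that the assignment $\xi_i\mapsto s_{k_i}$ need not be injective, so $F$ may have strictly fewer than $n$ elements. This is harmless, since both supremum bounds above hold regardless of whether distinct points of $x$ are approximated by the same point of $D$. One should also observe that finite subsets of $S$ are automatically compact, so that $d_H$ is genuinely well defined on $S^\infty$ (using boundedness of $S$, as in the preceding Remark); and the empty set, if it is counted as an element of $S^\infty$, may simply be adjoined to $\mathcal{D}$ without disturbing countability.
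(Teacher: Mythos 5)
Your proof is correct, and it is genuinely more self-contained than the paper's: the paper disposes of this proposition in one line by citing the known separability of the hyperspace of non-empty compact subsets of a separable metric space under $d_H$ (see Remark~\ref{haus} and \cite{roc09}), together with the implicit fact that a subspace of a separable metric space is separable, since every element of $S^\infty$ is a finite, hence compact, subset of $S$. You instead construct the countable dense set explicitly -- all finite subsets of a countable dense subset $D$ of $S$ -- and verify density by bounding both one-sided suprema in $d_H$. Your argument is essentially the standard proof of the very hyperspace result the paper invokes, restricted to finite sets, so nothing is lost; what it buys is independence from the external reference and an explicit description of the dense family, and you are right to flag the two minor points (non-injectivity of the approximation $\xi_i \mapsto s_{k_i}$, and the status of the empty set in $S^\infty$) that the paper's citation-style proof silently passes over.
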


\begin{remark} \label{haus} 
Moreover if $S$ is complete, the metric space of compact non-empty subsets of $S$ endowed with $d_H$ turns out to be a complete and locally compact metric space (see Chapter 4 in \cite{roc09}). 
\end{remark}
\begin{proposition} \label{besifuerte} Let us consider $(X,Y)\in S^\infty \times \{0,\ldots,M\}$. Suppose that $X_j\doteq X|Y=j$ {is} a Poisson process on $S^\infty$ with intensity function $\lambda_j$, for $j=1,\ldots,M$, respectively. Let us assume that $\lambda_j$, are continuous functions of $\rho$ and that the measure $\nu$ does not have atoms (i.e. $\nu(\{\zeta\})=0$ for all $\zeta\in S$). Then, for all $x\in S^\infty$ condition (\ref{besicovitch2}) holds for $\eta(x)=\mathbb{E}(Y|X=x)$ with $d=d_H$. 
\end{proposition}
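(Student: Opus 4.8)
The plan is to work directly from the density formula rather than to invoke the a.s.-continuity shortcut mentioned right after \eqref{besicovitch2}, because that shortcut genuinely fails here. Writing $\eta(x)=\sum_{j} j\,\mathbb{P}(Y=j\mid X=x)$ with
\[
\mathbb{P}(Y=j\mid X=x)=\frac{p_j f_{X_j}(x)}{\sum_{i} p_i f_{X_i}(x)},\qquad f_{X_j}(x)=e^{\nu(S)-\mu_j(S)}\prod_{\xi\in x}\lambda_j(\xi),
\]
(using \eqref{eqdens2}), one sees that the number of factors in the product jumps when an extra point is added arbitrarily close to an existing one: a pattern $y=x\cup\{\xi'\}$ with $\xi'$ near some $\xi\in x$ satisfies $d_H(x,y)\to 0$ while $\eta(y)\not\to\eta(x)$. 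Hence \emph{every} $x$ is a discontinuity point of $\eta$ on $(S^\infty,d_H)$, and the averaged continuity required by \eqref{besicovitch2} must instead be extracted from the fact that such cardinality-increasing patterns are rare under $P_X$.

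First I would describe the small Hausdorff balls. Fix a nonempty $x=\{\xi_1,\dots,\xi_n\}$ (its points are automatically distinct, $x$ being a set) and take $\delta$ small enough that the balls $B_i:=B(\xi_i,\delta)$ are pairwise disjoint. From the definition of $d_H$, a pattern $y$ lies in $B_{d_H}(x,\delta)$ iff $y\subset U_\delta:=\bigcup_i B_i$ and $y\cap B_i\neq\emptyset$ for every $i$; in particular $\#y\ge n$. Let $A_\delta\subset B_{d_H}(x,\delta)$ be the subset of patterns having exactly one point in each $B_i$ and no point outside $U_\delta$.

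The proof then rests on two estimates. Writing $m_{ji}:=\mu_j(B_i)=\int_{B_i}\lambda_j\,d\nu$ and using the independence of the Poisson counts on the disjoint sets $B_1,\dots,B_n,S\setminus U_\delta$, a direct computation gives, for each class,
\[
P_{X_j}(A_\delta)=e^{-\mu_j(S)}\prod_i m_{ji},\qquad P_{X_j}\big(B_{d_H}(x,\delta)\big)=e^{-\mu_j(S)}\prod_i\big(e^{m_{ji}}-1\big).
\]
Since $\nu$ has no atoms, $\nu(B_i)\to\nu(\{\xi_i\})=0$, and as $\lambda_j$ is continuous (hence bounded on the compact $S$) we get $m_{ji}\to 0$; therefore $m_{ji}/(e^{m_{ji}}-1)\to 1$ and $P_{X_j}(A_\delta)/P_{X_j}(B_{d_H}(x,\delta))\to 1$ for each $j$. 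Because $P_X=\sum_j p_j P_{X_j}$, a weighted-mean bound ($\sum_j a_j/\sum_j b_j\le\max_j a_j/b_j$) transfers this to the mixture, yielding $P_X\big(B_{d_H}(x,\delta)\setminus A_\delta\big)/P_X\big(B_{d_H}(x,\delta)\big)\to 0$. Separately, on $A_\delta$ each point $y_i\in B_i$ satisfies $\rho(y_i,\xi_i)<\delta$, so continuity of the $\lambda_j$ makes the products $\prod_i\lambda_j(y_i)$, and hence $\eta(y)$, converge to $\eta(x)$ uniformly over $y\in A_\delta$; write $\omega(\delta)\to 0$ for this modulus.

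Finally I would combine the two estimates. Splitting the Besicovitch integral over $A_\delta$ and its complement inside the ball and using $0\le\eta\le M$,
\[
\frac{1}{P_X(B_{d_H}(x,\delta))}\int_{B_{d_H}(x,\delta)}|\eta(x)-\eta(y)|\,dP_X(y)\le \omega(\delta)+M\,\frac{P_X(B_{d_H}(x,\delta)\setminus A_\delta)}{P_X(B_{d_H}(x,\delta))},
\]
which tends to $0$. Thus the inner limit in \eqref{besicovitch2} vanishes for every nonempty $x$ (the empty pattern is isolated for $d_H$, so its ball is $\{\emptyset\}$ and the integrand is identically $0$), and consequently the outer $P_X$-measure of the exceptional set is $0$ for every $\epsilon$, which is exactly \eqref{besicovitch2}. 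The main obstacle is the relative-measure estimate: the whole argument hinges on proving that, inside the shrinking Hausdorff ball, the cardinality-increasing patterns — precisely those on which $\eta$ is discontinuous — carry asymptotically negligible $P_X$-mass, which is where the Poisson independence and the second-order smallness of $\mathbb{P}(\#(X\cap B_i)\ge 2)$ relative to $\mathbb{P}(\#(X\cap B_i)=1)$ are essential. A minor technical point to dispatch is that $\eta$ is well defined (i.e.\ $\sum_i p_i f_{X_i}(x)>0$) for $P_X$-a.e.\ $x$, which is all that \eqref{besicovitch2} requires.
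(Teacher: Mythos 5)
Your proof is correct, and structurally it mirrors the paper's argument: both decompose the Hausdorff ball according to whether the cardinality of the pattern is preserved, control $\eta$ on the equal-cardinality part via continuity of the intensities through the density formula \eqref{eqdens2} (your uniform modulus $\omega(\delta)$ is exactly the content of the paper's Lemma \ref{extra}), observe that $d_H(x,y)<r$ forces $\#y\ge\#x$ once $r$ is below half the minimal interpoint distance of $x$, and then show that the cardinality-increasing patterns are negligible. Where you genuinely differ is in how that negligibility is quantified. The paper bounds $P_X\big(B_{d_H}(x,r)\cap\{\#y>\#x\}\big)$ \emph{absolutely} by $\tfrac12\sum_i\mu^2(B_\rho(\xi_i,r))$ via the Poisson tail and makes this smaller than $\epsilon/2$ using the non-atomicity of $\nu$; but the Besicovitch quotient divides by $P_X(B_{d_H}(x,r))$, which itself tends to $0$, so the paper's concluding inequality $\int_{B_{d_H}(x,r)}|\eta(y)-\eta(x)|\,dP_X(y)\le\epsilon\,P_X(B_{d_H}(x,r))$ additionally requires a lower bound on $P_X(B_{d_H}(x,r))$ comparable to $\sum_i\mu^2(B_\rho(\xi_i,r))$, which is not made explicit there. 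Your closed-form computation $P_{X_j}(B_{d_H}(x,\delta))=e^{-\mu_j(S)}\prod_i(e^{m_{ji}}-1)$ and $P_{X_j}(A_\delta)=e^{-\mu_j(S)}\prod_i m_{ji}$, combined with the mediant inequality to pass to the mixture $P_X$, supplies precisely this missing comparison and delivers a true relative (conditional-probability) estimate; it is therefore the more complete route, at the mild cost of exploiting the exact Poisson product structure of the ball. The only points worth stating explicitly are that the well-definedness of $\eta(x)$ and the positivity of $P_X(B_{d_H}(x,\delta))$ (i.e.\ $m_{ji}>0$) hold only off a $P_X$-null set, which is all that \eqref{besicovitch2} requires --- you already flag the first of these.
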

From Propositions \ref{separ} and \ref{besifuerte} and Theorems 4.1 and 5.1 in \cite{ffl:12} it follows the consistency of the $k$-NN estimator of the regression function $\mathbb{E}(Y|X=x)$  which in turn gives the consistency of the classification rule built up from such estimator.

\

Although we stated the consistency of the $k$-NN rule for the Hausdorff distance,
we could have two points very close in Hausdorff distance but with very dissimilar cardinal. Via some simulation studies we noted that adding to the Hausdorff distance a term that forces points close enough in Hausdorff distance to have the same cardinality, the performance of the classification rule improves considerably. Basically this is due to the fact that in point processes analysis the cardinality of the points is an important characteristic to distinguish between populations. {Moreover, we performed a simulation study (see Section \ref{sec:integrenigual}) to show that, for two populations with the same expected number of points but different intensity, the Haussdorf distance is still a good choice, without the necessity of adding a new term. With all this in mind, we define new metrics in $S^\infty$ which have shown to outperform Hausdorff distance, and give the consistency of the $k$-NN rule for them. 
\begin{definition} \label{distpoints} Given $x,y\in S^\infty$, we define a new distance $d$ on $S^\infty$ as:
\begin{equation} \label{dist}
d(x,y)=\frac{1}{\text{diam}(S)}d_H(x,y)+d_0(x,y),
\end{equation}
where $diam(S)$ denotes the diameter of $S$ (i.e $diam(S)=\sup_{x,y\in S} \rho(x,y)$) and $d_0:S^\infty\times S^\infty\rightarrow [0,1]$ is a function (not necessarily a distance) which verifies:
\begin{itemize}
\item[1.] $\#x=\#y$ implies $d_0(x,y)=0$;
\item[2.] $d_0(x,y)=d_0(y,x)$;
\item[3.] for all $z\in S^\infty$, $d_0(x,z)\leq d_0(x,y)+d_0(y,z)$;
\item[4.] $\forall \, x\in S^\infty \, \text{there exists } \epsilon_0=\epsilon_0(x)>0 \text{ such that, if } d_0(x,y) < \epsilon_0 \text{ then } \#x = \#y.$
\end{itemize}
\end{definition}

\

In what follows we list a set of functions verifying conditions 1--4 in Definition \ref{distpoints}: 
\begin{itemize}
\item $d_0(x,y)= \frac{|\#x-\#y|}{1+|\#x-\#y|}$;
%\item $d_0(x,y)=1-\exp(-\big|\#x-\#y\big|)$;
\item Hellinger: $d_0(x,y)^2=1-\exp\Big\{-\frac{1}{2}\big(\sqrt{\#x}-\sqrt{\#y}\big)^2\Big\}$;
\item Kulback Leibel: $d_0(x,y)=1-\exp\Big\{(\#y-\#x) \log \left( \#x / \#y\right)\Big\}$.
\end{itemize}
As before, in the following two propositions we state sufficient conditions to get the $L^2$-consistency (see Theorem 4.1 and 5.1 in \cite{ffl:12})) of the $k$-nearest neighbor rule in $(S^\infty,d)$ with $d$ as in (\ref{dist}), {which in turn gives the consistency of the classification rule built up from such estimator.}

\begin{proposition} \label{separ2} 
The space $(S^\infty,d)$ with $d$ as defined in (\ref{dist}) is separable.
\end{proposition}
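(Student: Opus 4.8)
The plan is to exploit condition 1 of Definition \ref{distpoints}: since $d_0$ vanishes whenever two configurations have the same cardinality, the metric $d$ decouples $S^\infty$ into its cardinality levels, on each of which $d$ reduces to a rescaled Hausdorff distance. Concretely, I would write $S^\infty=\bigcup_{n\geq 0} S^\infty_n$, where $S^\infty_n \doteq \{x\in S^\infty : \#x = n\}$, and handle each level separately before reassembling a global dense set.

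First I would observe that for $x,y\in S^\infty_n$ condition 1 gives $d_0(x,y)=0$, whence $d(x,y)=\frac{1}{\text{diam}(S)}d_H(x,y)$ on $S^\infty_n$. Thus on each level $d$ and $d_H$ differ only by the positive constant factor $1/\text{diam}(S)$ and therefore induce the same topology. Since $(S^\infty,d_H)$ is separable by Proposition \ref{separ}, and separability is hereditary for subspaces of a metric space (separable metric spaces being second countable, a property inherited by subspaces), each $(S^\infty_n,d_H)$ is separable; rescaling the metric does not alter the topology, so $(S^\infty_n,d)$ is separable as well. I then fix, for each $n$, a countable $d$-dense subset $D_n\subseteq S^\infty_n$.

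Next I would set $D\doteq\bigcup_{n\geq 0} D_n$, which is countable as a countable union of countable sets, and verify that it is $d$-dense. Given any $x\in S^\infty$, write $n=\#x$ so that $x\in S^\infty_n$; for $\epsilon>0$ the density of $D_n$ in $(S^\infty_n,d)$ yields $y\in D_n\subseteq D$ with $d(x,y)<\epsilon$. Hence $D$ meets every $d$-ball, which proves separability.

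The only point that requires care—and the reason one cannot simply reuse a $d_H$-dense set globally—is that condition 4 forces the cardinality to be locally constant for $d$: since $d(x,y)\geq d_0(x,y)$, any $y$ with $\#y\neq\#x$ obeys $d(x,y)\geq d_0(x,y)\geq \epsilon_0(x)>0$, so the ball $B_d\big(x,\epsilon_0(x)\big)$ is contained in $S^\infty_n$ and each level $S^\infty_n$ is open in $(S^\infty,d)$. Consequently a set dense for the coarser $d_H$-topology need not be dense for $d$, and the level-by-level construction above is precisely what circumvents this; everything else is the routine inheritance of separability by subspaces and its invariance under rescaling of the metric.
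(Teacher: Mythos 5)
Your proof is correct and takes the same route as the paper, whose entire proof is the one-line assertion that separability of $(S^\infty,d)$ is a direct consequence of Proposition \ref{separ}. Your decomposition into the cardinality classes $S^\infty_n$ (on each of which $d$ reduces to a rescaled $d_H$, and which are $d$-open by condition 4 of Definition \ref{distpoints}) is precisely the argument needed to make that assertion rigorous, and your closing remark that a globally $d_H$-dense set need not be $d$-dense correctly pinpoints why the reduction is not completely immediate.
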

\begin{proposition} \label{besi2}
Let $(S,\rho)$ be a bounded metric space. If the intensity $\lambda$ of a Poisson process $X$ defined on $S^\infty$ is  continuous on $S$ (with respect to the distance $\rho$) then the regression function $\eta $ is continuous with respect to the metric $d$ defined in (\ref{dist}) and then it fulfils condition (\ref{besicovitch2}). 
\end{proposition}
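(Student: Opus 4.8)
The plan is to prove the slightly stronger statement that $\eta$ is continuous at every $x\in S^\infty$ at which the marginal density of $X$ is positive, and then invoke the observation recorded just after \eqref{besicovitch2} that $P_X$-a.s. continuity of $\eta$ already implies the Besicovitch condition. First I would write $\eta$ explicitly. By Bayes' formula together with the Radon--Nikodym derivatives \eqref{eqdens2}, the posterior probabilities and the Poisson densities are
\[
\pi_j(x)=\mathbb{P}(Y=j\mid X=x)=\frac{p_j f_{X_j}(x)}{\sum_{i} p_i f_{X_i}(x)},\qquad f_{X_j}(x)=\exp\!\big[\nu(S)-\mu_j(S)\big]\prod_{\xi\in x}\lambda_j(\xi),
\]
so that $\eta(x)=\sum_{j=0}^{M} j\,\pi_j(x)$. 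Since the $\pi_j$ are ratios with a common denominator, it suffices to show that each map $x\mapsto f_{X_j}(x)$ is (sequentially) continuous with respect to $d$; the quotient, and hence $\eta$, is then continuous at every $x$ where $\sum_i p_i f_{X_i}(x)>0$. This last set has full $P_X$-measure, because $\sum_i p_i f_{X_i}$ is precisely the density of the mixture $P_X=\sum_j p_j P_{X_j}$ with respect to the reference process $Poisson(S,1)$, so the set where it vanishes is $P_X$-null.

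The decisive step is to understand what $d(y_k,x)\to0$ forces on the atoms. Because $d_0\le d$, condition~4 of Definition~\ref{distpoints} furnishes $\epsilon_0(x)>0$ with $d(y_k,x)<\epsilon_0\Rightarrow\#y_k=\#x$; hence $\#y_k=\#x=:n$ for all large $k$. Writing $x=\{\xi_1,\dots,\xi_n\}$ (distinct points, since every element of $S^\infty$ is a set), I set $r_x=\tfrac12\min_{i\ne l}\rho(\xi_i,\xi_l)>0$ for $n\ge2$. For $\delta<r_x$ the balls $B(\xi_i,\delta)$ are pairwise disjoint, and $d_H(y_k,x)<\delta$ forces, via the two suprema defining $d_H$, that each $\xi_i$ lies within $\delta$ of $y_k$ and each atom of $y_k$ lies within $\delta$ of some $\xi_i$. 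Thus every ball meets $y_k$ and all $n$ atoms of $y_k$ lie in the union of the $n$ disjoint balls, so by counting each ball captures exactly one atom. This produces a bijective labelling $y_k=\{\eta_1^{(k)},\dots,\eta_n^{(k)}\}$ with $\rho(\eta_i^{(k)},\xi_i)<\delta$, and letting $d_H(y_k,x)\to0$ yields $\eta_i^{(k)}\to\xi_i$ for each $i$ (the cases $n=0,1$ being immediate). I expect this matching to be the main obstacle: equality of cardinalities together with vanishing Hausdorff distance must be leveraged, through the strictly positive separation $r_x$ of the distinct atoms of $x$, to upgrade convergence of sets into coordinatewise convergence of the individual points.

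Once the atoms converge, the conclusion is routine. Continuity of $\lambda_j$ on $(S,\rho)$ gives $\lambda_j(\eta_i^{(k)})\to\lambda_j(\xi_i)$, hence the finite products converge,
\[
\prod_{i=1}^n\lambda_j(\eta_i^{(k)})\longrightarrow\prod_{i=1}^n\lambda_j(\xi_i),
\]
so $f_{X_j}(y_k)\to f_{X_j}(x)$ for every $j$. At any $x$ with positive denominator this makes each $\pi_j$, and therefore $\eta$, continuous at $x$; such points form a set of full $P_X$-measure by the first paragraph. Finally, $P_X$-a.s. continuity of $\eta$ delivers \eqref{besicovitch2} by the remark already recorded after that display, which completes the argument.
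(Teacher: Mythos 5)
Your proposal is correct and follows essentially the same route as the paper: use condition 4 of Definition \ref{distpoints} to force $\#y=\#x$ for $y$ close to $x$ in $d$, match the atoms of $y$ to those of $x$, invoke continuity of $\lambda$ to get continuity of the density $f$, and pass to $\eta$ via (\ref{funcreg}). You actually supply two details the paper leaves implicit — the disjoint-balls argument that upgrades $d_H$-convergence with equal cardinalities to coordinatewise convergence of atoms, and the restriction to points where the mixture density is positive (a full-measure set) so that the quotient in (\ref{funcreg}) is well defined.
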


As we will see in Section \ref{sec:efectok}, for all the distances $d_0$ before defined, higher number of neighbor gives better classification (although seven neigh\-bors could be the right choice since for seven or more neighbors the results are the same).

\section{Why do we need to prove the Besicovitch condition?}\label{pqbesi}
The Besicovitch condition would be trivial if the space $S^{\infty}$ were finite dimensional. However, this space is not even a vector space therefore, first we need to define what ``infinite dimensional" means. 

\begin{definition} \label{nagata} A metric space $(\mathcal{H},d)$ is finite dimensional (in the Nagata sense) if there exists $n_0>0$ such that for all $a\in X$, $r>0$, and $n>n_0$ points $y_i\in B_d(a,r)$, there exists $i\neq j$ such that $d(y_i,y_j)\leq r$. A metric space is said to be $\sigma$-finite dimensional if it is equal to the numerable union of finite dimensional sets.\end{definition}

The following result ensures that $(S^\infty,d)$ where $d$ is as in (\ref{dist}) is not finite dimensional.  

\begin{proposition} \label{diminfiprop} 
Let us assume that there exists $r_0>0$ such that for all $\zeta\in S$, and all $r<r_0$, $\partial B(\zeta,r)$ contains two points  $\pi^1,\pi^2$ such that $\rho(\pi_1,\pi_2)=2r$. Then the space $(S^\infty,d\big)$ is not finite dimensional. 
\end{proposition}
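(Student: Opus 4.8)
We need to show that $(S^\infty, d)$ is not finite dimensional in the Nagata sense. By Definition 3, this means: for every candidate $n_0$, we must exhibit a center $a \in S^\infty$, a radius $r > 0$, and $n > n_0$ points $y_1, \dots, y_n \in B_d(a,r)$ such that every pair is far apart, i.e. $d(y_i, y_j) > r$ for all $i \neq j$. The hypothesis gives us, at every scale $r < r_0$ and around every point $\zeta \in S$, a pair of antipodal points on the sphere $\partial B(\zeta, r)$ realizing the full diameter $2r$ of that sphere. This is precisely the geometric raw material needed to spread many points out while keeping them all near a common center.

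**The construction.** The plan is to build the configuration using singletons, so that the metric $d$ reduces essentially to the Hausdorff part (the cardinality term $d_0$ vanishes between singletons by Condition 1 of Definition 2). First I would fix an arbitrary $n_0$ and aim to produce $n = n_0 + 1$ mutually $r$-separated points inside a ball of radius $r$ about a suitable center. The idea is to choose a base point and iteratively apply the hypothesis: around a point $\zeta_0$ I obtain antipodal $\pi^1, \pi^2$ on $\partial B(\zeta_0, r)$ with $\rho(\pi^1, \pi^2) = 2r$; then I recurse at a smaller scale around each of these to generate a branching family of points, all lying within the single ball $B(\zeta_0, r_0)$ yet pairwise separated. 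Because $d_H$ between two singletons $\{a\}, \{b\}$ equals $\rho(a, b)$, and $d(\{a\},\{b\}) = \rho(a,b)/\mathrm{diam}(S)$, I would rescale: choosing the working radius $r$ in the $d$-metric corresponds to radius $r \cdot \mathrm{diam}(S)$ in $\rho$, so I must keep that below $r_0$, which is possible by taking $r$ small.

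**The separation bookkeeping.** The technical heart is arranging that all $n$ singletons sit in one $d$-ball of radius $r$ about the center $a = \{\zeta_0\}$ while being pairwise at $d$-distance $> r$. The antipodal hypothesis is what breaks the usual ``many points in a small ball must cluster'' intuition: in $\mathbb{R}^k$ one can fit only boundedly many mutually $r$-separated points into a ball of radius $r$, but the hypothesis forces the ambient sphere to contain diametral pairs at every scale, which in a sufficiently rich space (think of an infinite-dimensional sphere, where infinitely many mutually orthogonal directions exist) lets the count grow without bound. I would exploit this by choosing, at the first step, the two antipodes $\pi^1, \pi^2$ at $\rho$-distance $2r\cdot\mathrm{diam}(S)$; both lie at $\rho$-distance $r \cdot \mathrm{diam}(S)$ from $\zeta_0$, hence at $d$-distance $r$ from $a$ — so I take a radius strictly larger than $r$ to include them — while their mutual $d$-distance is $2r$, comfortably exceeding the radius. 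Repeating at finer scales around each antipode, with radii chosen in a decreasing geometric sequence, produces the required $n_0 + 1$ points with pairwise $d$-distance bounded below by a fixed fraction of $r$, after a final rescaling of the target radius.

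**Main obstacle.** The delicate point, and where I expect the real work to lie, is the simultaneous control of the two inequalities across the recursion: every generated point must remain inside the \emph{single} ball about the fixed center $a$, yet all pairwise distances must stay above the radius. Naively branching halves the scale at each level, which shrinks pairwise separations between cousins in different branches; one must verify by the triangle inequality that points produced in distinct branches do not drift too close, and that points in the same branch (being antipodal) stay maximally separated. I would therefore organize the construction so that the first splitting already yields the dominant separation and later refinements only perturb positions by a geometrically summable amount, kept small enough (a fixed fraction of $r$) that the base separation $2r$ is never eroded below the threshold $r$. Verifying this quantitative stability of separations under the branching is the crux; once it is in place, the contradiction with finite dimensionality is immediate, since $n_0$ was arbitrary.
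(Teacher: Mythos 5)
There is a genuine gap, and it sits exactly where you place the ``crux'': a construction confined to singletons cannot work. The set of singletons $\bigl\{\{\zeta\}:\zeta\in S\bigr\}$ with the metric $d$ is an isometric copy of $(S,\rho)$ rescaled by $1/\mathrm{diam}(S)$, since $d_0$ vanishes between sets of equal cardinality and $d_H(\{a\},\{b\})=\rho(a,b)$. The antipodal hypothesis does \emph{not} make $S$ ``rich'' in the sense you invoke: it holds, for example, for a closed disk in $\mathbb{R}^{2}$, which is finite dimensional in the Nagata sense --- a standard packing argument (disjoint balls of radius $r/2$ about the $y_i$ fit inside a ball of radius $3r/2$) shows that a ball of radius $r$ in $\mathbb{R}^{D}$ contains at most $3^{D}$ points with pairwise distances exceeding $r$. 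So for $S\subset\mathbb{R}^{D}$, which is the motivating case of the proposition, no family of singletons can defeat $n_0=3^{D}$. The branching recursion does not repair this: to keep the children of a first-generation antipode (produced at scale $s'$ around a point at distance $s$ from the center) inside the ball of radius $t$ about the center you need $s+s'\le t$, while separating two such children requires $2s'>t$; together these force $s'>s$, contradicting the decreasing scales, so the recursion never produces more than the two points of the first split.

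The idea you are missing is that the infinite dimensionality of $(S^\infty,d)$ comes from letting the \emph{cardinality} of the configurations grow, not from any geometric richness of $S$. The paper's proof fixes $n$ distinct points $\xi_1,\dots,\xi_n\in S$, takes the single configuration $x=\{\xi_1,\dots,\xi_n\}$ as center and $t=\min\bigl\{\min_{i\neq j}\rho(\xi_i,\xi_j),r_0\bigr\}$ as radius, chooses antipodal pairs $\pi_i^1,\pi_i^2\in\partial B(\xi_i,2t/3)$ with $\rho(\pi_i^1,\pi_i^2)=4t/3$, and defines $y_i$ to be $\{\pi_1^1,\dots,\pi_n^1\}$ with the $i$-th entry flipped to $\pi_i^2$. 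All the $y_i$ have cardinality $n$, so $d_0$ contributes nothing; each $y_i$ is obtained from $x$ by moving every site by $2t/3$, hence lies in $B_d(x,t)$; and $y_i$, $y_j$ disagree by a diametral swap at the $i$-th site, giving $d(y_i,y_j)=4t/3>t$. Since $n$ is arbitrary, the Nagata condition fails for every $n_0$. In short, the $n$ ``independent directions'' are indexed by the $n$ points of the configuration, which is why the dimension of $S^\infty$ is unbounded even when $S$ is a planar set.
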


\section{Extensions of the results to Gibbs process}\label{gibss}
Gibbs processes appear as a natural generalization of Poisson processes since they allow a spatial dependency between the numbers of points in two disjoints subsets of $S$ (compare with the definition of Poisson process introduced in Section \ref{pp}). We prove that Proposition \ref{besifuerte} can be extended to this class of processes which, for instance, are being used in telecommunications to model the position of base stations for improving the performance of a wireless network, see \cite{zhu:15}, \cite{kelif} and \cite{guo}. 

Recall that a process is Gibbs if its density with respect to $Poisson(S,1)$ has the form $f(x)=c\exp(-U)$
being $c$ constant, where the energy $U(x)$ is admissible, in the sense that satisfy:
$$\sum_{n=0}^\infty \frac{e^{-\nu(S)}}{n!}q_n<\infty\quad \text{and} \quad q_n=\int_{S^n}\exp(-U(x))d\nu(x_1)\dots d\nu(x_n)<\infty.$$
Since we will assume that $S$ is compact, $\nu(S)<\infty$ and $U$ is a bounded function, the admissibility condition will be fulfilled. We will assume that the energy is of the form (see pg. 95 in \cite{gg:10})
\begin{equation} \label{energy}
U(x)=\sum_{i=1}^n\varphi(x_i)+\sum_{i=1}^n\sum_{j>i}^n \psi(\|x_i-x_j\|).
\end{equation}
This includes as a particular case $Poisson(S,\lambda)$, taking $-\varphi(x)=\log(\lambda(x))$ and $\psi=0$, and \textit{Strauss processes}, for which $f(x)=c\beta^{n(x)}\gamma^{s_r(x)}$ with $s_r(x)=\sum_{i<j}\mathbb{I}_{\{\|x_i-x_j\|<r\}}$ and $n(x)=\#x$. The following proposition extends  Proposition \ref{besifuerte} to this kind of process.

\begin{proposition} \label{besifuerte2} Let us consider $(X,Y)\in S^\infty \times \{0,\dots,M\}$ being $(S,\rho)$ compact. Suppose that for all $j=1,\dots,M$, $X_j\doteq X|Y=j$ is a Gibbs process  with energy $U_j$ given by  \eqref{energy}, with $\varphi_j$ and $\psi_j$ continuous functions. Let us assume that the measure $\nu$ does not have atoms (i.e. $\nu(\{\zeta\})=0$ for all $\zeta\in S$). Then, for all $x\in S^\infty$ the Besicovitch condition holds for $\mathbb{E}(Y|X=x)$ with $d=d_H$. 
\end{proposition}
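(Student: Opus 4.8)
The plan is to deduce the Besicovitch condition from a stronger, Lebesgue-point-type statement: I will show that for $P_X$-almost every $x$ the inner limit in (\ref{besicovitch2}) is actually $0$, which immediately implies (\ref{besicovitch2}). First I would write the regression function explicitly through the Gibbs densities. Setting $p_j=\mathbb{P}(Y=j)$ and letting $f_j(x)=c_j\exp(-U_j(x))$ denote the density of $X_j=X\mid Y=j$ with respect to $Poisson(S,1)$ (with $U_j$ as in (\ref{energy})), Bayes' formula gives
\[
\eta(x)=\mathbb{E}(Y\mid X=x)=\frac{\sum_{j} j\,p_j\, f_j(x)}{\sum_{j} p_j\, f_j(x)},
\]
so that $\eta$ is a fixed smooth function of the finitely many densities $f_0(x),\dots,f_M(x)$ and everything reduces to the behaviour of these densities under small $d_H$-perturbations of $x$. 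The reason I cannot simply invoke the sufficient condition ``$\eta$ continuous $\Rightarrow$ Besicovitch'' stated after (\ref{besicovitch2}) is that $d_H$ does not control cardinality: appending to $x$ a point very close to an existing one produces a $d_H$-close configuration with a different value of $\#y$, hence a jump in $U_j$. Thus $\eta$ is genuinely $d_H$-discontinuous, and the averaged statement must be checked by hand.

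Next I would localize. Because $\nu$ is non-atomic and each $f_j$ is bounded, $P_X$-almost every $x=\{x_1,\dots,x_n\}$ has pairwise distinct points; fixing such an $x$ and a $\delta$ small enough that the balls $B(x_i,\delta)$ are disjoint, the two defining inequalities of $d_H(x,y)<\delta$ force $y\subset\bigcup_i B(x_i,\delta)$ with at least one point of $y$ in each $B(x_i,\delta)$, so $\#y\ge n$. I would then split $B_\delta:=B_{d_H}(x,\delta)$ into a \emph{matched} part $M_\delta$ (where $\#y=n$, i.e.\ exactly one point of $y$ per ball) and an \emph{excess} part $E_\delta$ (where $\#y>n$). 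On $M_\delta$, relabelling the point of $y$ in $B(x_i,\delta)$ as $y_i$, uniform continuity of $\varphi_j$ and $\psi_j$ on the compact $S$ makes each of the finitely many terms of $U_j(y)=\sum_i\varphi_j(y_i)+\sum_{i<k}\psi_j(\|y_i-y_k\|)$ converge, uniformly over $M_\delta$, to the corresponding term of $U_j(x)$ as $\delta\to0$; hence $f_j(y)\to f_j(x)$ and $\eta(y)\to\eta(x)$ uniformly on $M_\delta$, so this part contributes $\sup_{y\in M_\delta}|\eta(x)-\eta(y)|\to0$ to the average.

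The hard part will be showing that the excess part is negligible in $P_X$-measure. Comparing with the reference measure $\Pi=Poisson(S,1)$ and writing $v_i=\nu(B(x_i,\delta))\to0$, the matched event has $\Pi$-probability of order $\prod_i v_i$, whereas every configuration in $E_\delta$ carries at least one extra point in the vanishing-measure set $\bigcup_i B(x_i,\delta)$ and thus picks up an additional factor $O(\sum_i v_i)$, giving $\Pi(E_\delta)=o(\Pi(M_\delta))$. To transfer this to $P_{X_j}$ I must compare the Gibbs weight $c_j e^{-U_j}$ on the two parts; boundedness of $\varphi_j,\psi_j$ on the compact $S$ together with the admissibility of $U_j$ keeps this ratio under control, yielding $P_X(E_\delta)=o(P_X(B_\delta))$. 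Since $|\eta(x)-\eta(y)|\le M$, combining the two parts gives
\[
\frac{1}{P_X(B_\delta)}\int_{B_\delta}|\eta(x)-\eta(y)|\,dP_X(y)\le\sup_{y\in M_\delta}|\eta(x)-\eta(y)|+M\,\frac{P_X(E_\delta)}{P_X(B_\delta)}\xrightarrow[\delta\to0]{}0
\]
for $P_X$-almost every $x$, which is exactly the Lebesgue-point statement I want, and hence (\ref{besicovitch2}) for every $\epsilon>0$. I expect the genuine obstacle to be this last measure estimate: inside $E_\delta$ the number of pairwise interaction terms in $U_j$ grows quadratically with the number of excess points, so controlling the density ratio between strata of growing cardinality is delicate and is precisely where admissibility of the energy (and, in the repulsive regime $\psi_j\ge0$, boundedness of $e^{-U_j}$) must be used to guarantee that clustering of extra points cannot overwhelm the vanishing $\Pi$-measure. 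This is also the only place where the argument genuinely departs from the Poisson case of Proposition \ref{besifuerte}, which corresponds to $\psi_j\equiv0$.
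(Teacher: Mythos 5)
Your plan is correct in outline and follows essentially the same route as the paper's sketch: split $B_{d_H}(x,r)$ into the stratum $\{\#y=\#x\}$, where continuity of $\varphi_j$ and $\psi_j$ (hence of the finitely many terms of $U_j$ in \eqref{energy} and of the densities $f_j=c_je^{-U_j}$) gives $|\eta(x)-\eta(y)|<\epsilon/2$ exactly as in Lemma \ref{extra}, and the excess stratum $\{\#y>\#x\}$, whose $P_X$-measure is controlled through the explicit form of the Gibbs density with respect to $Poisson(S,1)$ and the non-atomicity of $\nu$. The only real divergence is in how the excess stratum is killed, and here the comparison is instructive. The paper does it bluntly: under its standing assumption that the energy $U$ is a bounded function (made when admissibility is discussed), $f_1=\sup f<\infty$ uniformly over all cardinalities, and the one-line bound $P_X(N_{B_\rho(\xi_i,r)}>1)\le \tfrac{f_1}{2}\nu^2(B_\rho(\xi_i,r))$, summed over the $\#x$ disjoint balls, makes the excess term vanish as $r\to0$; the ``quadratic growth of interaction terms'' you worry about is simply assumed away, and indeed without that boundedness (e.g.\ a Strauss process with $\beta>1$, for which $\sup f=\infty$) the one-line bound fails, so your caution is well placed. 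Conversely, your version is more demanding than the paper's in a way that is actually more faithful to \eqref{besicovitch2}: you ask for the relative estimate $P_X(E_\delta)=o(P_X(B_\delta))$, whereas the paper only records an absolute $o(1)$ bound for $P_X(E_\delta)$ and leaves the division by $P_X(B_{d_H}(x,r))$ (which also tends to zero) implicit. To close your flagged gap along the paper's lines, combine the two devices: boundedness of $U$ gives the upper bound $f\le f_1$ on the excess stratum, while continuity and strict positivity of $f$ on the fixed-cardinality stratum near $x$ give $P_X(M_\delta)\ge f_0\,\Pi(M_\delta)$ with $\Pi(M_\delta)$ of order $\prod_i\nu(B(x_i,\delta))$, so the extra factor $O\big(\sum_i\nu(B(x_i,\delta))\big)$ carried by $E_\delta$ under $\Pi$ survives the change of measure and yields the quotient you need.
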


%%%%%%%%%%%%%%%%%%%%%%%%%%%%%%%%%%%%%%%%%%%%%%%%%%%%%%%%
\section{Simulations} \label{simu}

In order to assess the performance of the proposed classification rules for two populations and see how the nature of the density function affect the methods we have implemented some simulation studies. First we show the behaviour in three different scenarios, one in which the densities are smooth and decrease to zero exponentially fast, another for very wiggly densities, and a last one where the expected number of points is the same, but the distribution of points is different. We also carry out three simulation studies to show the robustness under departure from the Poisson assumption, the effect of $\sigma$ in the estimation of the intensities (\ref{estint}) and the effect of $k$ in the $k$-nearest neighbor distances (\ref{dist}). 

\

In what follows, we will use the following notation for the different distances:
\begin{itemize}
\item \textsf{KNN\_Hausdorff}: $k$-NN in $(S^\infty,d_H)$;
\item \textsf{KNN\_Hausdorff\_d1}: $k$-NN in $(S^\infty,d)$ with $d$ given in (\ref{dist}) and $d_0(x,y)= \frac{|\#x-\#y|}{1+|\#x-\#y|}$;
\item \textsf{KNN\_Hausdorff\_Hellinger}: $k$-NN in $(S^\infty,d)$ with $d$ given in (\ref{dist}) and  
$d_0(x,y)^2=1-\exp\Big\{-\frac{1}{2}\big(\sqrt{\#x}-\sqrt{\#y}\big)^2\Big\}$;
\item \textsf{KNN\_Hausdorff\_KL}: $k$-NN in $(S^\infty,d)$ with $d$ given in (\ref{dist}) and $d_0(x,y)=1-\exp\Big\{(\#y-\#x) \log \left( \#x / \#y\right)\Big\}$ .
\end{itemize}

In all the simulations we generated training and testing samples of size $100$ ($50$ for each class), used $100$ replications. We chose $k$ in the $k$-NN rule via cross validation. For the Bayes rule we used cross validation to get the optimal $\sigma$ in Sections \ref{smooth-case} and \ref{wiggly-case} but we fixed $\sigma = 0.1$ in Sections \ref{sec:integrenigual} and \ref{sec:nonpoisson}.
 
\subsection{Behaviour of our proposed methods in three different scenarios}
\subsubsection{Smooth case.}\label{smooth-case}
In this case, for the class 0 we generate the processes in the square $[0,1]^2$ with intensity
\begin{equation*} \label{exp1} 
\lambda_0(x,y)=c_2\exp(-20((x-1/2)^2+(y-1/2)^2)),
\end{equation*}
and for class 1,
\begin{equation*} 
\lambda_1((x,y),c_1,d_1)=c_1\exp(-d_1((x-1/2)^2+(y-1/2)^2)).
\end{equation*}
In Figure \ref{figmodiimean} we report the misclassification rate for different values of the parameters $c_1$ and $d_1$, with $c_2=500$. For a better understanding, in Figure \ref{figexp} we plot different level sets of both of the estimated intensities. Let us observe that the intensities in this case overlap considerably, which difficulties the classification. As expected, the misclassification rate decreases when the difference between $c_1-50$ and $d_1-20$ increase. 
\begin{figure}[h!]
	\begin{center}
		\includegraphics[height=9.5cm, width=1\textwidth]{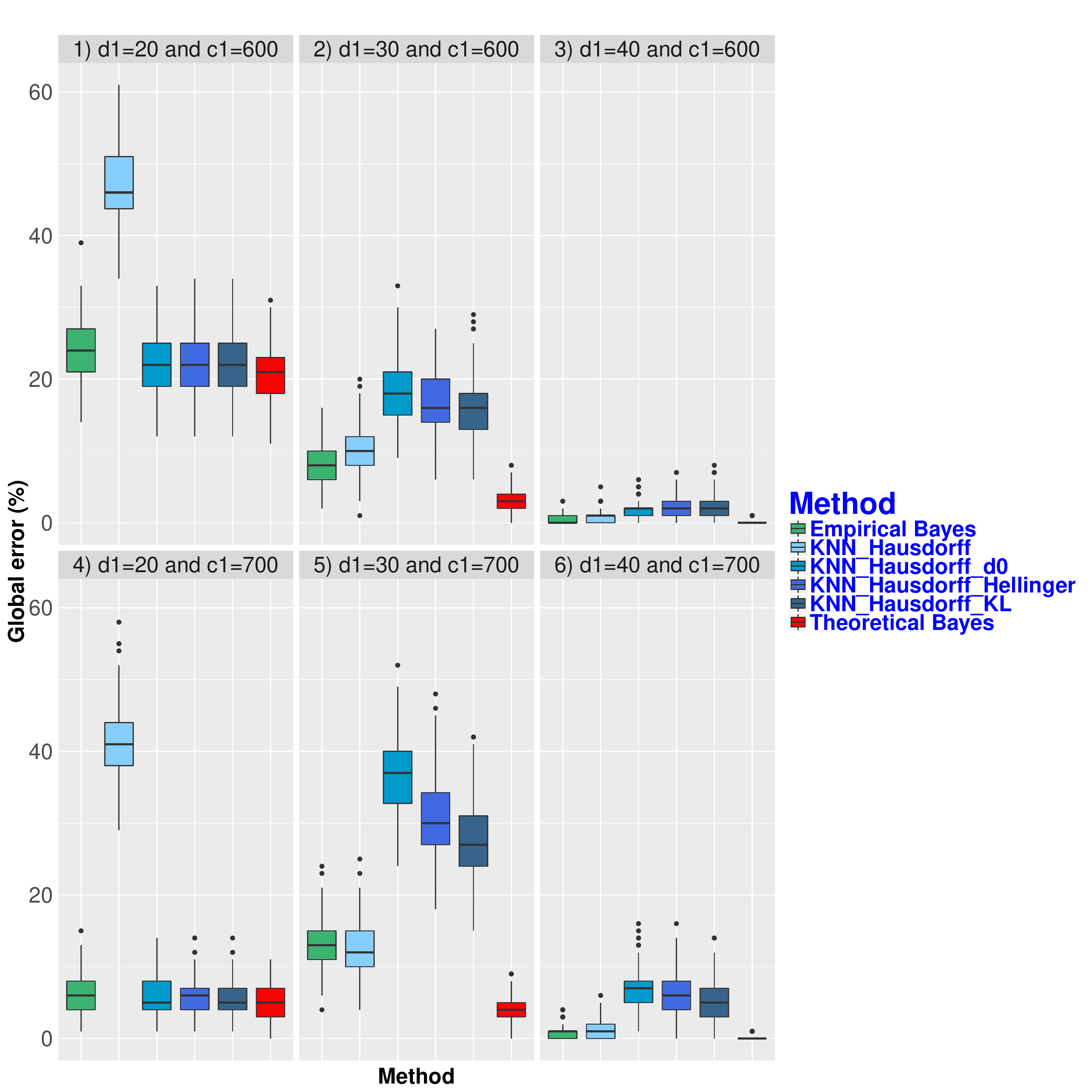} 
		\vspace{-0.85cm}
		\caption{Misclassification rates distribution for simulation from Section \ref{smooth-case}.}
		\label{figmodiimean}
	\end{center}
\end{figure}
\begin{figure}[h!]
	\begin{center}
		\includegraphics[width=0.7\textwidth]{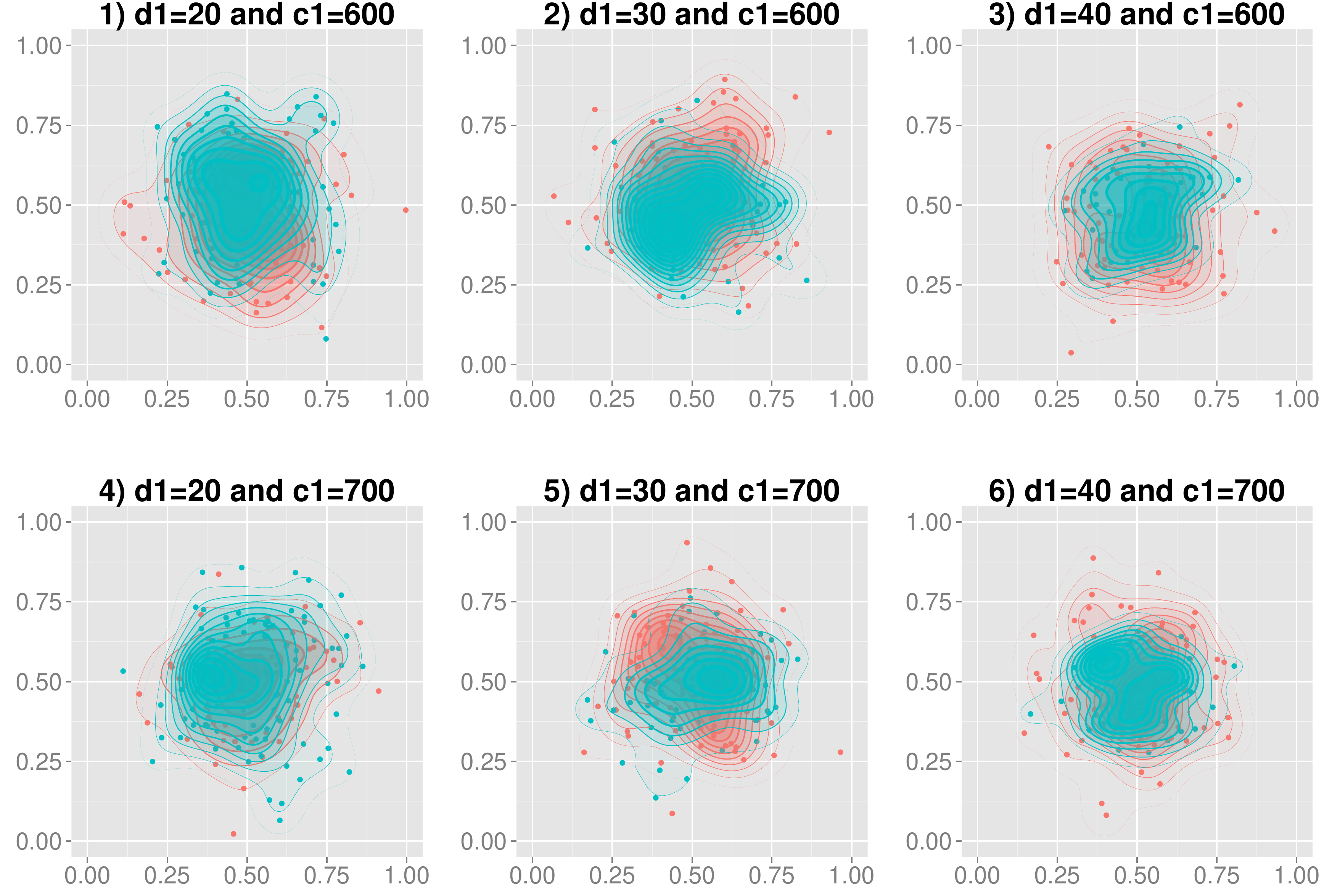} 
		\vspace{-0.4cm}
		\caption{Intensities for simulation from Section \ref{smooth-case}. }
		\label{figexp}
	\end{center}
\end{figure}

\subsubsection{Wiggly case.} \label{wiggly-case}  
In this case, for class 0 we generate the processes in the square $[0,1]^2$ with intensity
\begin{equation*} 
\lambda_0(x,y)=80+80xy\sin(1/(xy)),
\end{equation*}
and for class 1, 
\begin{equation*} 
\lambda_1((x,y),c_2)=c_2+ 30xy\sin(1/(xy)),
\end{equation*}
where $c_2$  is a positive constant. In Figure \ref{figsin} we report the boxplot of the misclassification rate for different values of the parameter $c_2$ and in Figure \ref{figw} we plot different level sets of both of the estimated intensities. 
\begin{figure}[h!]
	\begin{center}
		\includegraphics[width=0.8\textwidth]{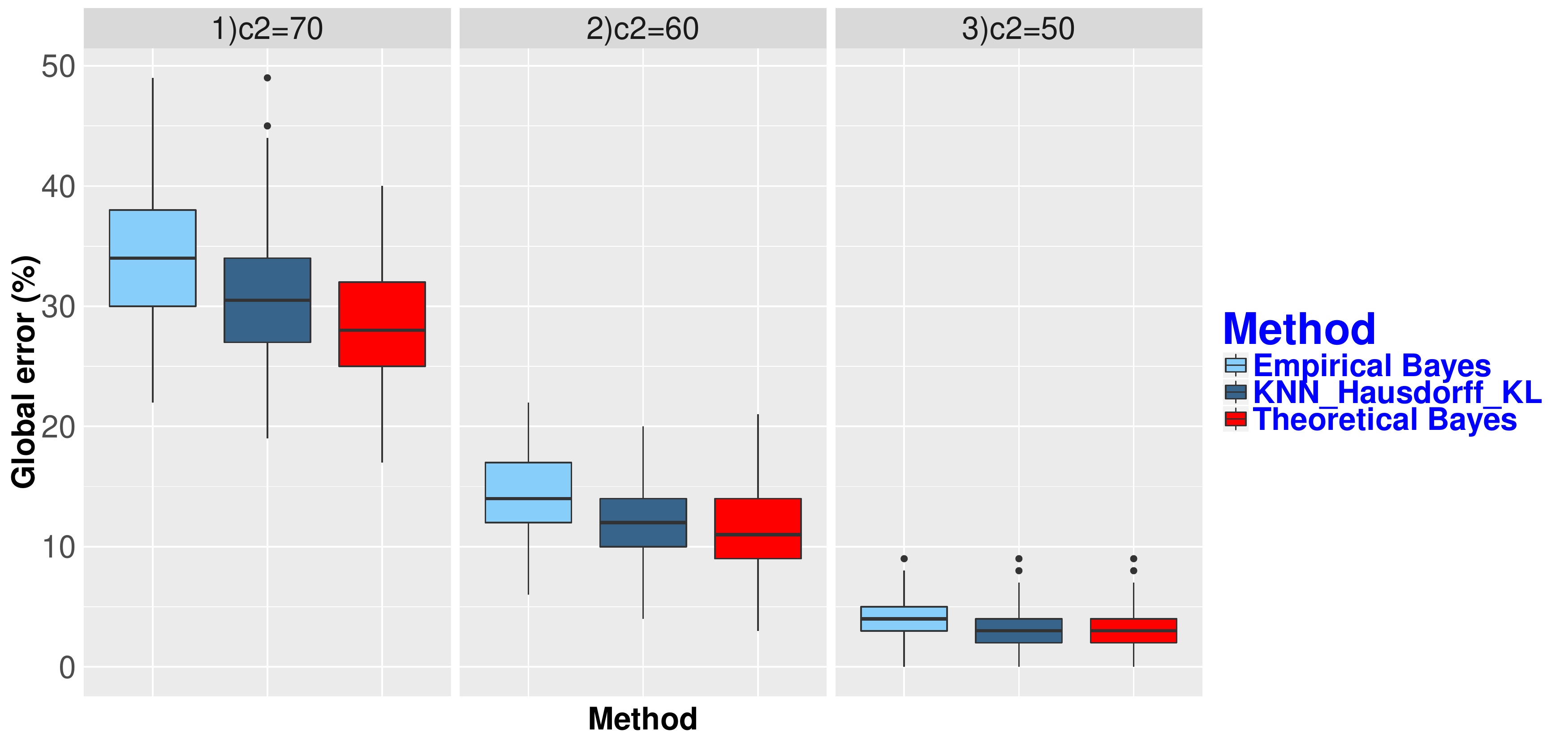} 
		\vspace{-0.4cm}
		\caption{Misclassification rates distribution for simulation from Section \ref{wiggly-case}.}
		\label{figsin}
	\end{center}
\end{figure}
\begin{figure}[h!]
	\begin{center}
		\includegraphics[width=0.8\textwidth]{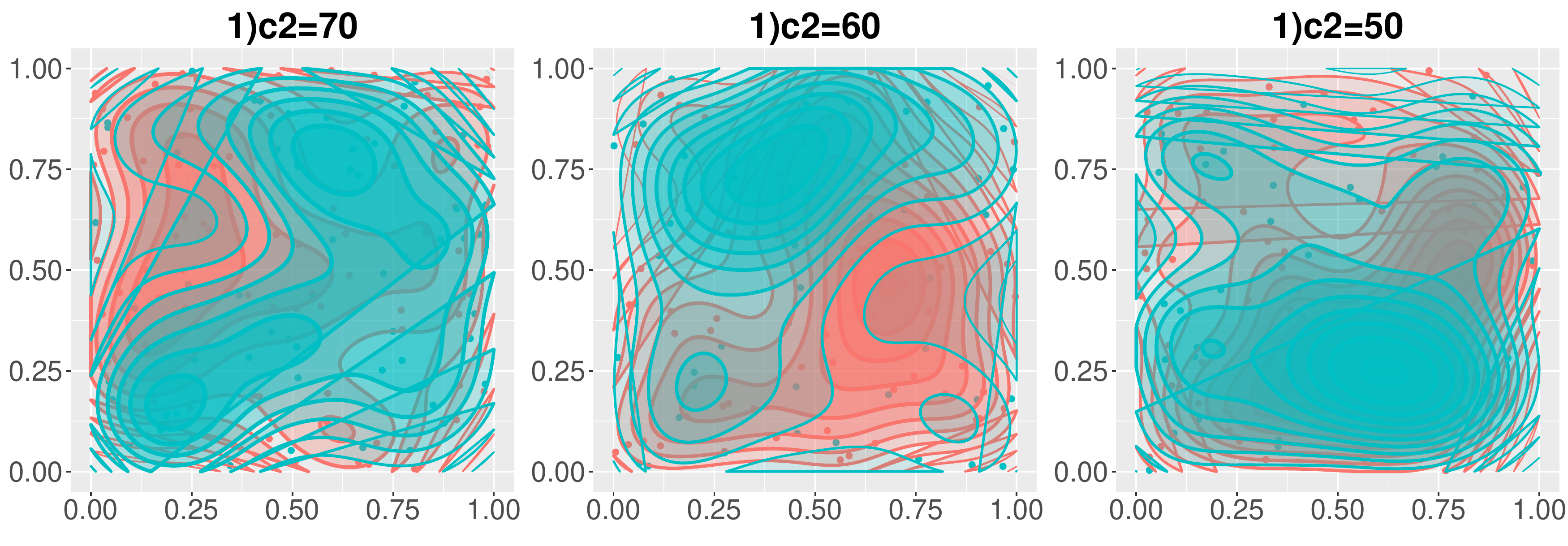} 
		\vspace{-0.4cm}
		\caption{Intensities for simulation from Section \ref{wiggly-case}.}
		\label{figw}
	\end{center}
\end{figure}

Again, as expected, the misclassification rate decreases when the difference between $c_2$ and $80$ increases.

\subsubsection{Different intensities but same expected number of points.}\label{sec:integrenigual}
In this case we generate two processes in the square $[-1,1]\times[-1,1]$. Both of them have intensity with the same height but one of them centered at $[-1/4,0]$ and the other one centered at $[0,1/4]$ as shown in Figure \ref{int_integrenigual}.
In Figure \ref{integrenigual} we report the misclassification rate where we can see that, in this case, Bayes rules performs much better than the $k$-NN rule.
\begin{figure}[h!]
	\begin{center}
		\includegraphics[width=0.8\textwidth]{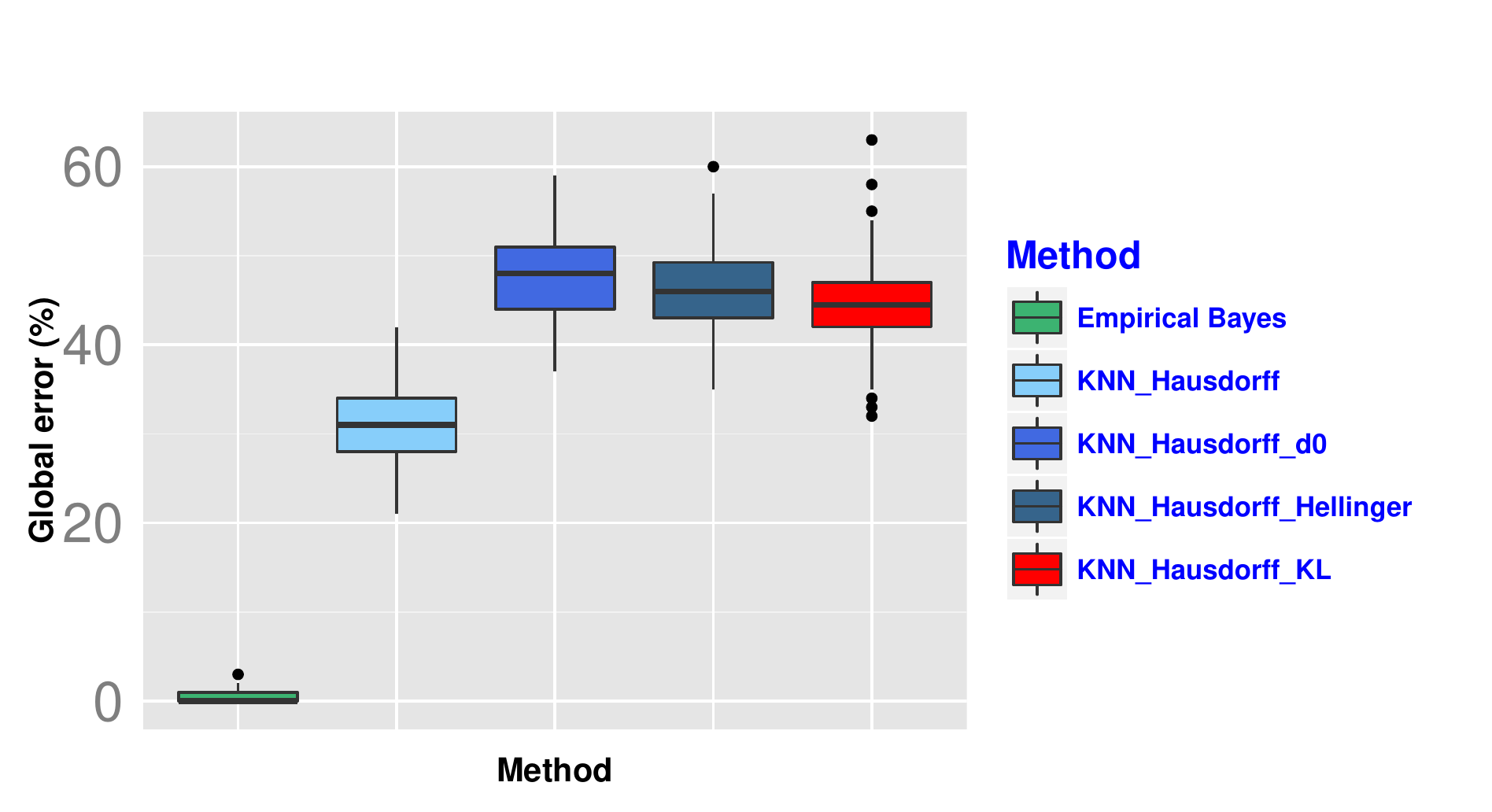} 		
		\vspace{-0.4cm}
		\caption{Misclassification rates of simulation from Section \ref{sec:integrenigual}.}
		\label{integrenigual}
	\end{center}
\end{figure}
\begin{figure}[h!]
	\begin{center}
		\includegraphics[width=0.7\textwidth]{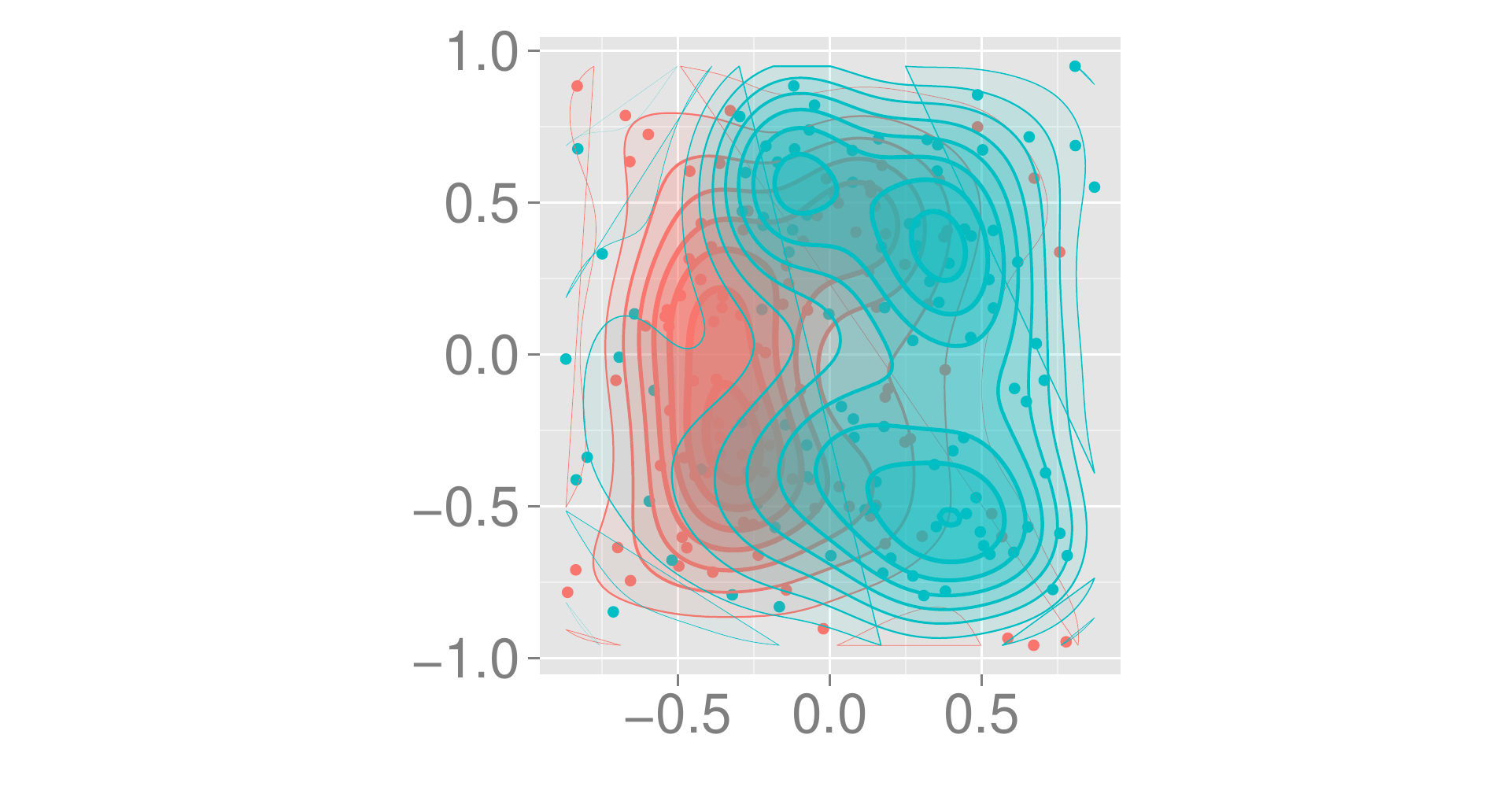} 
		\vspace{-0.4cm}
		\caption{Level sets for the estimated intensities of simulation from Section \ref{sec:integrenigual}.}
		\label{int_integrenigual}
	\end{center}
\end{figure}

As we can see in the previous simulations, in the case of densities with the same expected number of points (Section \ref{sec:integrenigual}), the estimated Bayes rules outperforms the $k$-NN based rules whereas in the wiggly case (\ref{wiggly-case}) $k$-NN achieves a better performance. This could be due to the fact that smooth intensities can be better estimated. For smooth functions (section \ref{smooth-case}) sometimes $k$-NN outperforms the Bayes rules (specially when adding an extra term to the distance).

\subsection{Robustness under non Poisson distributions} \label{sec:nonpoisson}
In this simulation we generate two Strauss processes (see Section \ref{gibss}) in the same region $W = [0, 10] \times [0, 10]$, one with parameters $\beta_1 = 0.5, \gamma_1 = 1, r_1 = 0.3$ and the other with parameters $\beta_2 = 1.5, \gamma_2 = 0.5, r_1 = 0.6$. In this case, the mean of the misclassification rates are: $0.083\%$ for the Bayes rule, $0.401\%$ for \textsf{KNN\_Hausdorff}, 
$0.072\%$ for \textsf{KNN\_Hausdorff\_d0}, $0.073\%$ for \textsf{KNN\_Hausdorff\_Hellinger} and 
$0.071\%$ for \textsf{KNN\_Hausdorff\_KL}.

\begin{figure}[h!]
	\begin{center}
		\includegraphics[width=0.8\textwidth]{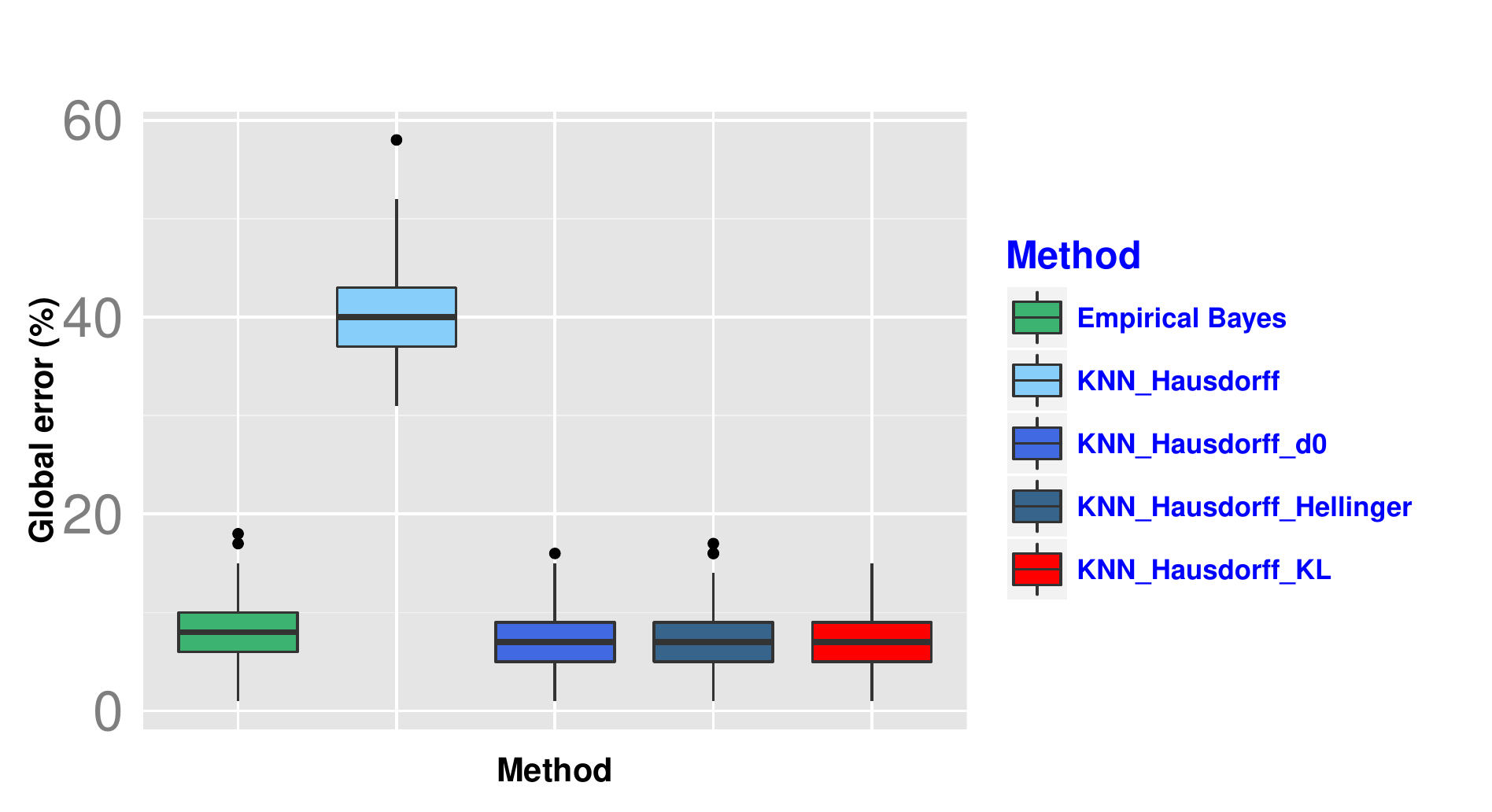} 		
		\vspace{-0.4cm}
		\caption{Misclassification rates of simulation from Section \ref{sec:nonpoisson}.}
		\label{nopoisson}
	\end{center}
\end{figure}

This shows, togheter with the boxplot of the misclassification rates (Figure \ref{nopoisson}) the robustness of our methods and a better performance for the $k$-NN base rules.

\subsection{Effect of the smoothing parameter used in the estimation of the intensities  (\ref{estint})} \label{sec:efectoh}
To show the effect of the smoothing parameter in the estimation of the intensity function (\ref{estint}), we run it in the setting described in subsection (\ref{smooth-case}) with $c_1=500$, $d_1 = 20$ and $\sigma_1$ for one of the intensities and $c_2 = 700$, $\sigma_2$ for the other. We took different combinations of $(\sigma_1,\sigma_2)$.   The misclassification rates are plotted in Figure \ref{efectoh} where in the epigraph of each graphic we put  $\sigma_2$ and in the $x$-axis $\sigma_1$. In the boxplots it can be seen that, in general, the best combination is $\sigma_1=\sigma_2$. 
\begin{figure}[h!]
	\begin{center}
		\includegraphics[width=1\textwidth]{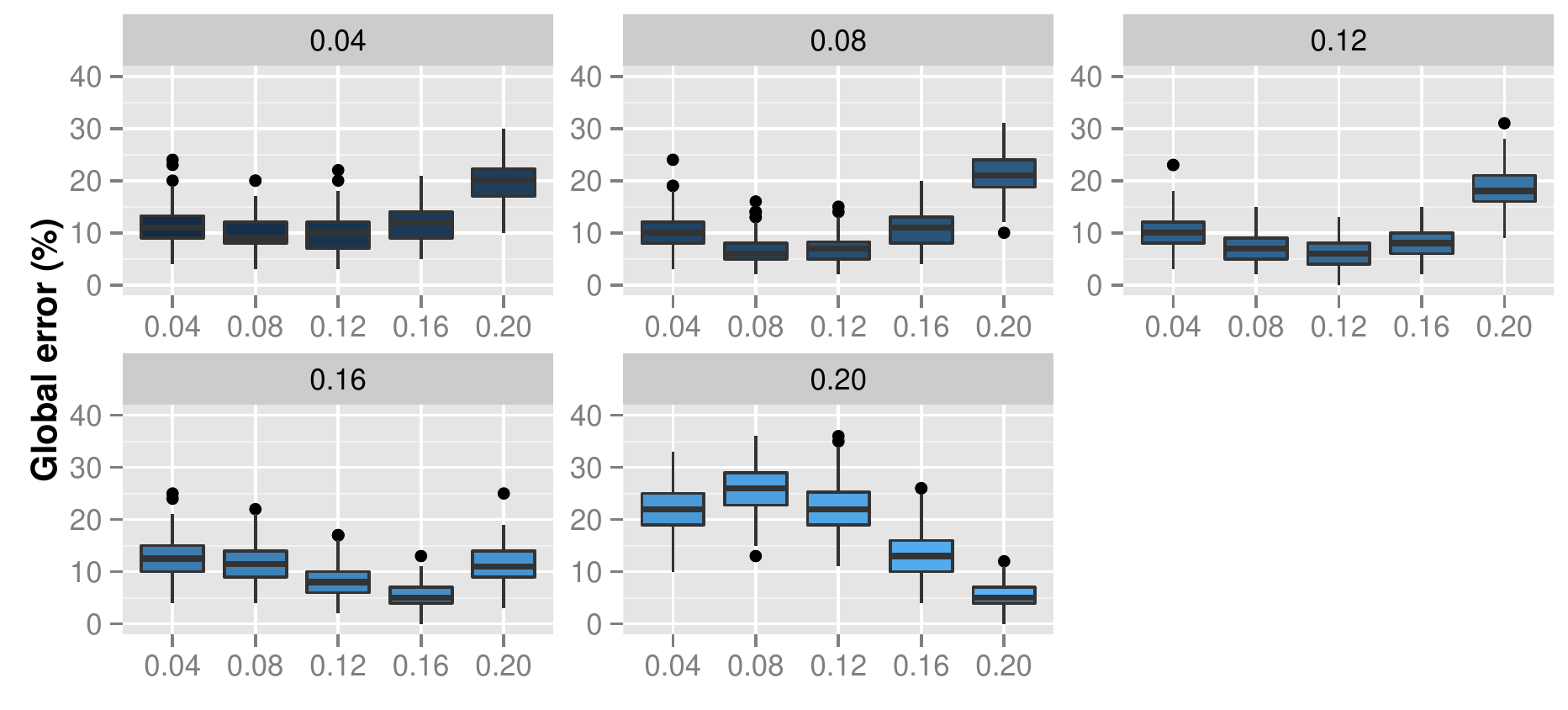} 
		\vspace{-0.4cm}
		\caption{Misclassification rates distribution of simulation from Section \ref{sec:efectoh}.}\label{efectoh}
	\end{center}
\end{figure}

\subsection{Effect of $k$ in the $k$-NN rule using different distances (\ref{dist})}\label{sec:efectok}
To asses the effect $k$ in the $k$-NN rule we run it in the setting described in subsection (\ref{smooth-case}) with $c_1=500$, $d_1 = 20$, $c_2 = 700$. The results are given in Figure \ref{efectok}. It can be seen that for all distances the $k$-NN rule performs better choosing higher values of $k$. However, it can be also seen that choosing a value greater than $7$ does not improve the performance considerably.
\begin{figure}[h!]
	\begin{center}
\includegraphics[width=0.45\textwidth]{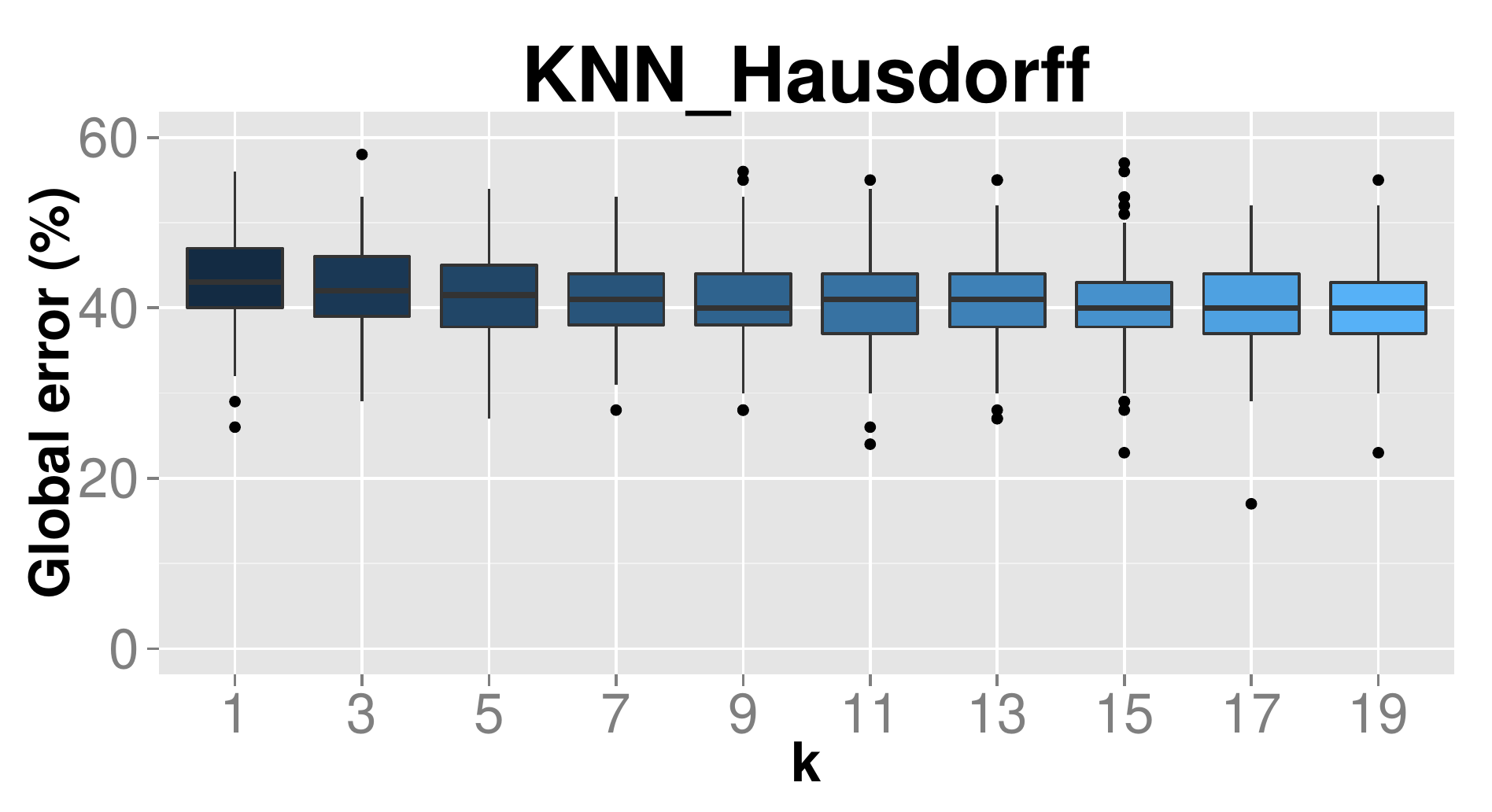} 
\includegraphics[width=0.45\textwidth]{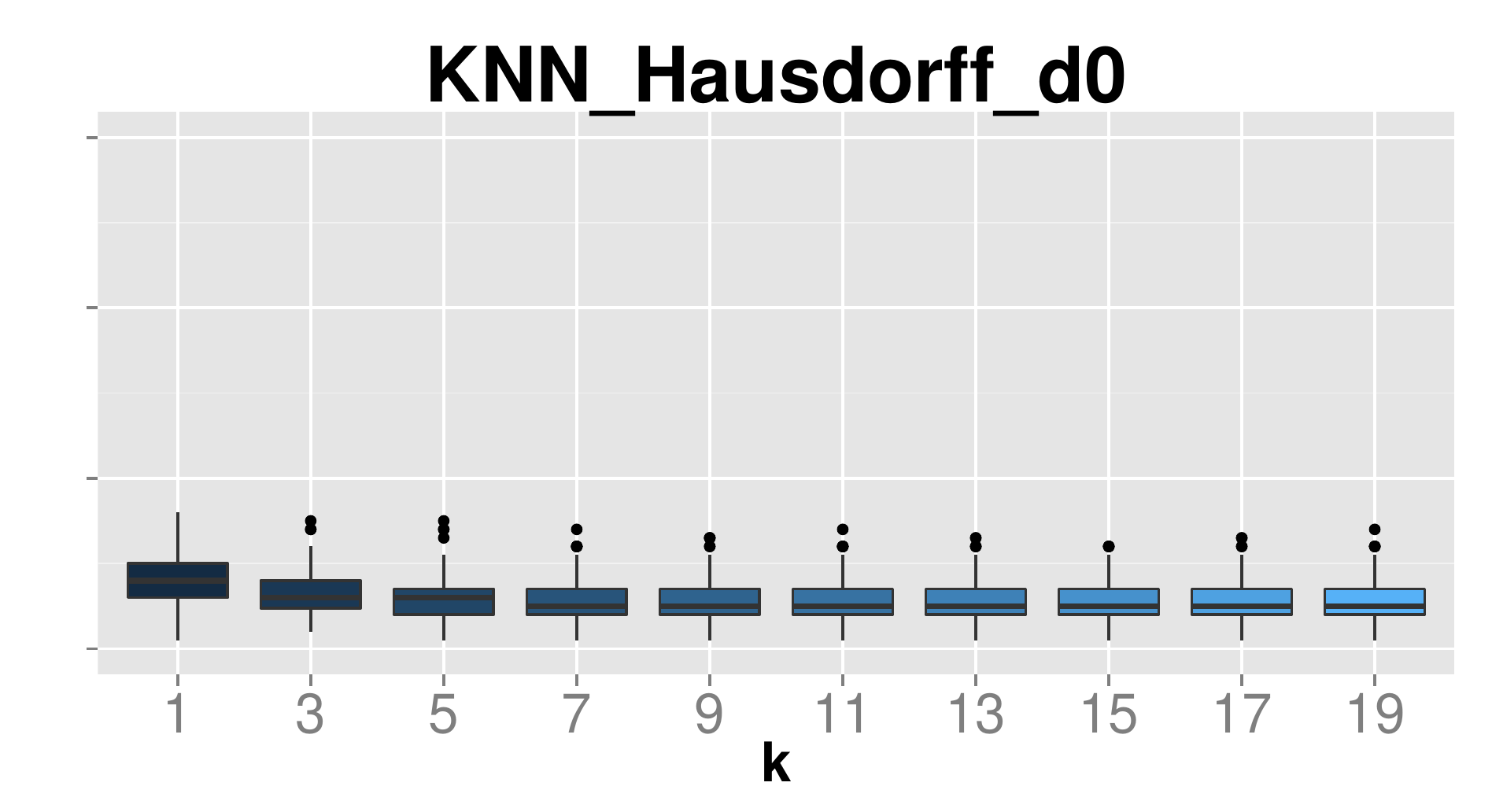}\\
\includegraphics[width=0.45\textwidth]{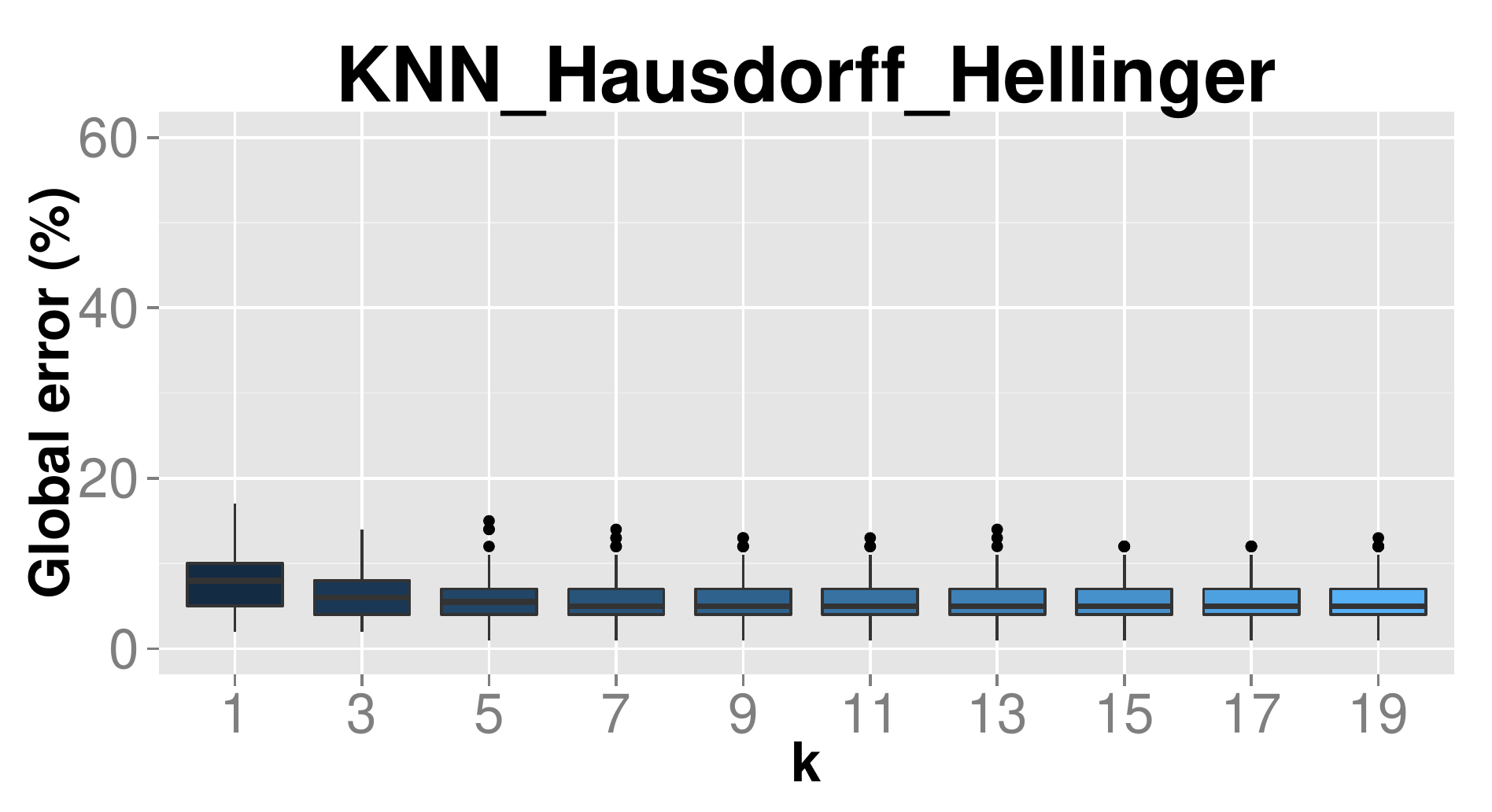} 
\includegraphics[width=0.45\textwidth]{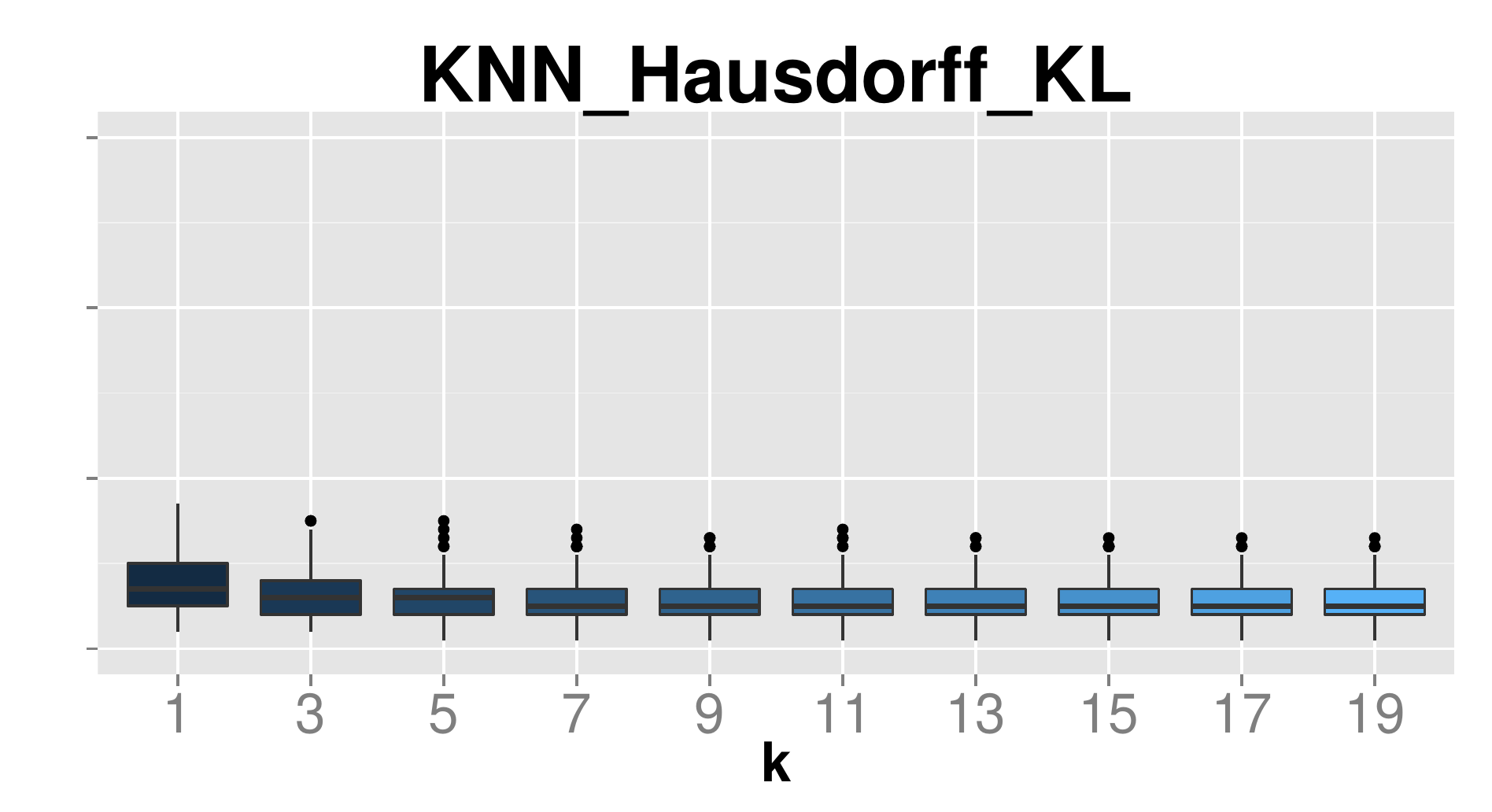}
		\vspace{-0.4cm}
		\caption{Misclassification rates distribution for the effect of $k$ in the $k$-NN rule (simulation from Section \ref{sec:efectok}). }\label{efectok}
	\end{center}
\end{figure}
\section{Real data example}\label{realdata}
The study of the geographic distribution of crimes gave rise to the well known ``social disorganization theory'', developed by the Chicago School (also called the Ecological School) which, since 1920, specializes in urban socio\-logy and urban environment research. It proposes that the neighborhood of a subject is as significant as the person's characteristics (like gender, race, etc). See Chapter 33 in \cite{handcrim} for a survey on this topic. The School collected the location as well as a description of many crimes including prostitution, assault, narcotics, battery, among others, reported between 2001 and 2016 in the city of Chicago and joint them in an open source database of more than 6 million entries. This database was recently employed in \cite{gervini} to fit a model for the intensity function of replicated point processes. In order to asses if there exists statistical differences between the spatial pattern of points of crimes, we performed  classification among the different crimes.
\begin{figure}[b!]
	\begin{center}
		\includegraphics[width=0.9\textwidth]{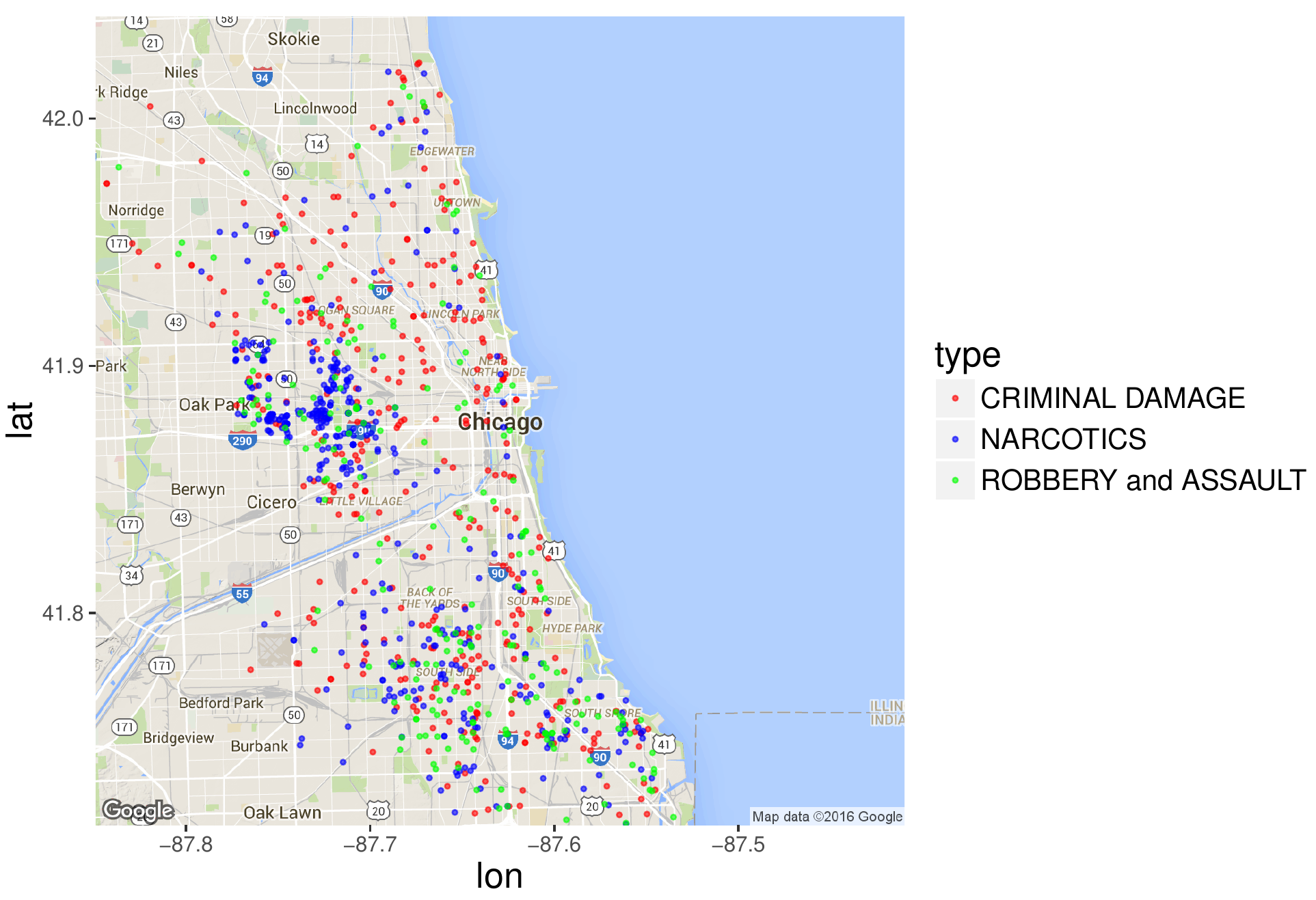} 
		\vspace{-0.4cm}
		\caption{Location of the reported assaults and robberies (green), narcotics (blue), and criminal damage (red) in one week in the city of Chicago.}
		\label{fig1}
	\end{center}
\end{figure}
 To get different samples of the same process we have split the data in periods of one week, comprehended between the first of January of 2014 and the first of January of 2016. As a result, for every type of crime we have 105 samples, 84 of them were used as training sample and the remaining 21 for the testing sample.  Since there exists a wide range of intensities between the different crimes, we have considered only three of them: \textit{assault and robber} (joined in one class, denoted as \textbf{AR}), \textit{narcotics} (\textbf{N}), and \textit{criminal damage} (\textbf{CD}). The mean value of locations registered in one week, for every sample is $502$, $482$ and $532$ for \textbf{AR}, \textbf{N}, and \textbf{CD} respectively. The classification errors obtained using $k$-NN rule for $k=20$ (this value minimize the misclassification error) were the following: between \textbf{N} and \textbf{CD} 7\%, \textbf{CD} and \textbf{AR} 6\%, and finally between \textbf{AR} and \textbf{N} we get 15\%. This result suggests that there is a stronger geographic similarity between the crimes typified as \textit{narcotics} and those typified as \textit{assault and robbery}. This can be also seen in Figure \ref{fig1}, were we represented the points for this 3 kind of crimes, reported in one week. There we can see that points in blue (\textbf{N}) and in green (\textbf{AR}) are very closed each other whereas the red ones are spread throughout all the city.

\section{Conclusions}
We have proposed two consistent classification techniques for point Poi\-sson processes: the $k$-NN and Bayes rule. The $k$-NN rule has shown better performance in cases in which the intensity function of the process is wiggly and for non Poisson processes whereas the Bayes rule did it when the intensity functions have the same expected number of points. From a theoretical point of view, we proved that the $k$-NN rule is consistent not only for the case of spatial process in $\mathbb{R}^d$, but also for processes taking values in any metric space. The rule has also shown to be robust against departures from Poisson distribution.

\section*{Appendix}\label{apendice}
\begin{proof}[Proof of Lemma \ref{bayes}]For $j=1,\ldots,M$, let  $f_{X_j}$ be the density of $X_j$ with respect to the Poisson process with intensity $1$. That is, $f_{X_j}$ is the Radon-Nykodim derivative $\frac{dP_{X_j}}{dP}(x)$ of $P_{X_j}$ with respect to the distribution $P$ of the Poisson process with intensity $\lambda=1$. Then, 
\begin{equation} \label{funcreg}
\mathbb{P}(Y=j|X=x)=\frac{f_{X_j}(x)\mathbb{P}(Y=j)}{f_{X}(x)}, \quad j=1,\ldots,M
\end{equation}
with $f_X(x)= \sum_{j=1}^M f_{X_j}(x)\mathbb{P}(Y=j)$ the total probability. Let $p_j=\mathbb{P}(Y=j)$, then we have 
\begin{equation}\label{ec1}
\mathbb{P}(Y=j|X=x) >  \mathbb{P}(Y=i|X=x) \Leftrightarrow  \frac{f_{X_j}(x)}{f_{X_i}(x)} > \frac{p_i}{p_j}.
\end{equation}
Now, since $\mu_j(S)=\int_S \lambda_j (\zeta)d\nu(\zeta)$ for $j=0,\ldots, i, \ldots,M$, from equation (\ref{eqdens2}) we get,
\begin{align*}
\frac{f_{X_j}(x)}{f_{X_i}(x)} &= \frac{\exp\Big[ \nu(S)-\mu_j(S)\Big] \prod_{\xi \in x} \lambda_j(\xi)}{\exp\Big[ \nu(S)-\mu_i(S)\Big] \prod_{\xi \in x} \lambda_i(\xi)} 
\\ &= \exp\Big[\mu_i(S)-\mu_j(S)\Big]\prod_{\xi \in x}  \frac{\lambda_j(\xi)}{ \lambda_i(\xi)}.
\end{align*}
And with this equality in (\ref{ec1}), it turns out that
\begin{align*}
& \mathbb{P}(Y=j|X=x) >  \mathbb{P}(Y=i|X=x) \nonumber \\ & \nonumber \\ & \hspace{3cm}\vertiff\\ & \exp\Big[\mu_i(S)-\mu_j(S)\Big]\prod_{\xi \in x}  \frac{\lambda_j(\xi)}{ \lambda_i(\xi)} > \frac{p_i}{p_j}.\nonumber
\end{align*}
Therefore, the Bayes rule classifies a point $x\in S^\infty$ in class $j$ if 
$$\exp\Big[\mu_i(S)-\mu_j(S)\Big]\prod_{\xi \in x}  \frac{\lambda_j(\xi)}{ \lambda_i(\xi)} > \frac{p_i}{p_j}, \text{ for all }i\neq j.$$
\end{proof}
 
\begin{proof}[Proof of Theorem \ref{consistencia}]
Let us fix $\zeta\in supp(\nu)$, and write,
\begin{equation} \label{descomp}
\Big|\hat{\hat{\lambda}}_m(\zeta)-\lambda(\zeta)\Big|\leq \Big|\hat{\hat{\lambda}}_m(\zeta)-\mathbb{E}\big(\hat{\hat{\lambda}}_m(\zeta)\big)\Big|+\Big|\mathbb{E}\big(\hat{\hat{\lambda}}_m(\zeta)\big)-\lambda(\zeta)\Big|.
\end{equation}
First observe that, conditioned to $\# X_j = n(j)$, the random variables $\xi_1,\dots,\xi_{n(l)}$ are an iid sample of $\xi$ with density $\lambda(\zeta)/\mu(S)$ (see Def 3.2 Moller-Waagepetersen), where $\mu(S)=\int_S \lambda(\xi)d\nu(\xi)$. In addition, since $X$ is a Poisson process, $\# X_j \sim \mathcal{P}(\mu(S))$, so that $\mathbb{E}(\# X_j) = \mu(S)$. Then, 
\begin{align*}
\mathbb{E}\big(\hat{\hat{\lambda}}_m(\zeta)\big) &= \mathbb{E}\Big[\mathbb{E}\Big(\hat{\hat{\lambda}}_m(\zeta)\Big|\# X_1=n(1),\dots,\#X_m=n(m)\Big)\Big]
\\ &= \mathbb{E}\Big[\mathbb{E}\Big(\frac{1}{m}\sum_{j=1}^m \frac{1}{K_{\sigma_m}(\zeta)}\sum_{i=1}^{n(j)}  k_{\sigma_m}\big(\rho(\zeta,\xi_i)\big)\Big|\# X_1=n(1),\dots,\#X_m=n(m)\Big)\Big] \\ &= \mathbb{E}\Big[\frac{1}{m}\sum_{j=1}^m \frac{1}{K_{\sigma_m}(\zeta)}\sum_{i=1}^{\# X_j} \mathbb{E}\big( k_{\sigma_m}\big(\rho(\zeta,\xi_i)\big)\Big|\# X_1=n(1),\dots,\#X_m=n(m)\Big)\Big] \\ &= \mathbb{E}\Big[\frac{1}{K_{\sigma_m}(\zeta)}\frac{1}{m}\sum_{j=1}^m \sum_{i=1}^{\# X_j} \frac{1}{\mu(S)} \int_S k_{\sigma_m}(\rho(\zeta,\xi))\lambda(\xi)d\nu(\xi))\Big] \\ &= \frac{1}{K_{\sigma_m}(\zeta)\mu(S)}  \int_S k_{\sigma_m}(\rho(\zeta,\xi))\lambda(\xi)d\nu(\xi) \mathbb{E}\Big[\frac{1}{m}\sum_{j=1}^m \# X_j \Big]  \\ &= \frac{1}{K_{\sigma_m}(\zeta)}  \int_S k_{\sigma_m}(\rho(\zeta,\xi))\lambda(\xi)d\nu(\xi))  \\
&=\mathbb{E}(\hat{\lambda}_1(\zeta)).
\end{align*}
With this in (\ref{descomp}) we have,
\begin{equation} \label{IyII}
\Big|\hat{\hat{\lambda}}_m(\zeta)-\lambda(\zeta)\Big|\leq \Big|\hat{\hat{\lambda}}_m(\zeta)-\mathbb{E}(\hat{\lambda}_1(\zeta))\Big|+\Big|\mathbb{E}(\hat{\lambda}_1(\zeta))-\lambda(x)\Big|\doteq I+II.
\end{equation}
To prove that $I \rightarrow 0$ observe that,
\begin{align}\label{uno}
\hspace{-0.5cm} & \mathbb{P}\left(\Bigg|\frac{1}{m}\sum_{j=1}^m \hat{\lambda}_j(\zeta)-\mathbb{E}(\hat{\lambda}_1(\zeta))\Bigg|>\epsilon\right)\\ &=
\mathbb{E}\left[\mathbb{P}\left(\Bigg|\frac{1}{m}\sum_{j=1}^m \hat{\lambda}_j(\zeta)-\mathbb{E}(\hat{\lambda}_1(\zeta))\Bigg|>\epsilon \, \Bigg|\# X_1=n(1),\dots,\#X_m=n(m)\right)\right]. \nonumber
\end{align}
In order to apply Hoeffding inequality to $\hat{\lambda}_j(\zeta)$ observe that, conditioned to $\#X_j=n(j)$, if we denote $\gamma_m(\zeta)=\nu(B_\rho(\zeta,\sigma_m))$,  $0\le \hat{\lambda}_j(\zeta) \leq K_1n(j)/\gamma_m(\zeta)$, $\forall x\in S$ and $j=1,\dots,m$, with $K_1= \max k(\zeta)/k_0$. Therefore, applying Hoeffding inequality we get
\begin{small}
\begin{align}\label{dos}
&\mathbb{E}\left[\mathbb{P}\left(\Bigg|\frac{1}{m}\sum_{j=1}^m \hat{\lambda}_j(\zeta)-\mathbb{E}(\hat{\lambda}_1(\zeta))\Bigg|>\epsilon \, \Bigg|\# X_1=n(1),\dots,\#X_m=n(m)\right)\right] \nonumber \\ &\le \mathbb{E}\left[2\exp\left(-\frac{2\epsilon^2(m\gamma_m(\zeta))^2}{K_1^2 \sum_{j=1}^m (\#X_j)^2} \right)\right]  \nonumber \\ 
&= \mathbb{E}\left[\mathbb{E}\bigg[2\exp\Big(-\frac{2\epsilon^2(m\gamma_m(\zeta))^2}{K_1^2(\sum_{j=1}^{m-1} (\#X_j)^2+(\#X_m)^2)} \Big)\Big|\# X_1=n(1),\dots,\#X_{m-1}=n(m-1)\bigg]\right]  \nonumber \\
&\leq \mathbb{E}\left[2\exp\left(-\frac{2\epsilon^2(m\gamma_m(\zeta))^2}{K_1^2\big(\mathbb{E}\big[\sum_{j=1}^{m-1} (\#X_j)^2+(\#X_m)^2\big|\# X_1=n(1),\dots,\#X_{m-1}=n(m-1)\big]\big)}\right)\right]  \nonumber\\
&=\mathbb{E}\left[2\exp\left(-\frac{2\epsilon^2(m\gamma_m(\zeta))^2}{K_1^2\big(\sum_{j=1}^{m-1} (\#X_j)^2+\mathbb{E}((\#X_m)^2)\big)}\right)\right]  \\
&=\mathbb{E}\left[2\exp\left(-\frac{2\epsilon^2(m\gamma_m(\zeta))^2}{K_1^2\big(\sum_{j=1}^{m-1} (\#X_j)^2+\mathbb{E}((\#X_m)^2)\big)}\right)\right]  \nonumber \\ & \hspace{0.2cm}\vdots \nonumber \\ &\le 
2\exp\left(-\frac{2\epsilon^2(m\gamma_m(\zeta))^2}{K_1^2 \sum_{j=1}^m \mathbb{E}((\#X_j)^2)} \right)  \nonumber\\ &= 2\exp\left(-\frac{2\epsilon^2m\gamma_m(\zeta)^2}{K_1^2 \mu(S)\big(1+\mu(S)\big)} \right), \nonumber
\end{align}
\end{small}
where we used the same conditioning trick $m$ times and, in the last equality, we used that $\# X_j \sim \mathcal{P}(\mu(S))$ so that $\text{var}(\# X_j) = \mu(S)$. 
Now, by Lemma $A2$ in \cite{ffl:12} (with $k_m=\log(m)^2$), there exists $\sigma_m(\zeta)$ such that $\gamma_m(\zeta) \ge \log(m)/\sqrt{m}$. Therefore, 
\begin{align}\label{tres}
\sum_{m=1}^\infty 2\exp\left(-\frac{2\epsilon^2m\gamma_m(\zeta)^2}{K_1^2\mu(S)\big(1+\mu(S)\big)} \right) &\leq \sum_{m=1}^\infty 2\exp\left(-\frac{2\epsilon^2 \log(m)^2}{K_1^2\mu(S)\big(1+\mu(S)\big)} \right) \nonumber \\ &= 2\sum_{m=1}^\infty m^{
-\frac{2\epsilon^2\log(m) }{K_1^2\mu(S)\big(1+\mu(S)\big)}}  < \infty,
\end{align} 
then, from (\ref{uno}), (\ref{dos}) and (\ref{tres}) it follows that,
\[
\sum_{m=1}^\infty  \mathbb{P}\left(\Bigg|\frac{1}{m}\sum_{j=1}^m \hat{\lambda}_j(\zeta)-\mathbb{E}(\hat{\lambda}_1(\zeta))\Bigg|>\epsilon\right) < \infty,
\]
and finally, by Borel-Cantelli's Lemma,
$$I=\Big|\frac{1}{m}\sum_{j=1}^m \hat{\lambda}_j(\zeta)-\mathbb{E}(\hat{\lambda}_1(\zeta))\Big|\rightarrow 0, \quad {a.s}.$$
In order to prove that $II \to 0$ in (\ref{IyII}) observe that, since $\lambda$ is a continuous function and $S$ is compact, for $\epsilon >0$ there exists $m_0$, such that for all $x\in S$, $\sup_{\xi \in B(\zeta,\sigma_m)}|\lambda(\xi)-\lambda(\zeta)|<\epsilon$ if $m>m_0$. In addition, $\frac{1}{K_{\sigma_m}(x) }\int_{B(\zeta,\sigma_m)} k_{\sigma_m}(\rho(\zeta,\xi))d\nu(\xi)=1$ then we get, 
\begin{align*}
II = |\mathbb{E}(\hat{\lambda}_1(\zeta))-\lambda(\zeta)| & =  \Big| \frac{1}{K_{\sigma_m}(\zeta) }\int_{B(\zeta,\sigma_m)} k_{\sigma_m}(\rho(\zeta,\xi))\lambda(\xi)d\nu(\xi)-\\ 
&\hspace{2cm} \lambda(x)\frac{1}{K_{\sigma_m}(\zeta) }\int_{B(\zeta,\sigma_m)} k_{\sigma_m}(\rho(\zeta,\xi))d\nu(\xi)\Big|\\
&\leq  \frac{1}{K_{\sigma_m}(\zeta) }\int_{B(\zeta,\sigma_m)} k_{\sigma_m}(\rho(\zeta,\xi))|\lambda(\xi)-\lambda(\zeta)|d\nu(\xi)\\
&< \epsilon,
\end{align*}
which completes the proof. 
\end{proof}
\begin{proof}[Proof of Proposition \ref{separ}] It follows directly from the separability of the space of compact subsets of $S$, endowed with the distance $d_H$.
\end{proof}
\begin{proof} [Proof of Proposition \ref{besifuerte}]
We will make use of the following Lemma:
\begin{lemma}\label{extra} Under the hypothesis of Proposition \ref{besifuerte}, for $x\in S^\infty$ and $\epsilon>0$ there exists $\delta =\delta(x,\epsilon)$ such that \begin{equation}\label{1} 
|\eta (x) - \eta (y) | < \epsilon/2 
\end{equation}
whenever $\# y = \#x$ and $y \in B_{d_H} (x,r)$, for all $r\le \delta$. 
\begin{proof} Observe that, from equality (\ref{funcreg}), to prove (\ref{1}) is enough to prove that for every $x=\{\xi_1,\dots,\xi_k\}\in S^\infty$ there exists $\delta=\delta(x)$ such that for all $y=\{\theta_1,\dots,\theta_k\}\in B_{d_H}(x,\delta)$, $|f_{X_i}(x)-f_{X_i}(y)|<\epsilon$ for $i=0,1$ (where $B_{d_H}(x,\delta)$ denotes the closed ball of radii $\delta$ in $S^\infty$, with the distance $d_H$). From identity (\ref{eqdens2}) we have, 
$$
|f_{X_i}(x)-f_{X_i}(y)|=\exp\Big[ \nu(S)-\mu_i(S)\Big]\Big|\prod_{i=1}^k\lambda_i(\xi_i)-\prod_{i=1}^k\lambda_i(\theta_i)\Big|.
$$
Therefore, since $\lambda_i$ is continuous, for all $\epsilon>0$ there exists $\delta=\delta(k,\epsilon)$ such that $|f_{X_i}(x)-f_{X_i}(y)|<\epsilon$ and the Lemma is proved.
\end{proof}
\end{lemma}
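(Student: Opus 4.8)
The plan is to reduce the asserted continuity of $\eta$ to the continuity of the class densities $f_{X_j}$, and then to obtain the latter from the continuity of the intensities $\lambda_j$ through an explicit bijective matching of the points of $x$ and $y$. Starting from (\ref{funcreg}), $\eta(x)=\mathbb{E}(Y|X=x)$ equals the fixed rational expression $\sum_{j} j\,p_j f_{X_j}(x)/f_X(x)$ with $f_X(x)=\sum_{j}p_j f_{X_j}(x)$, so it is a continuous function of the density vector $(f_{X_j}(x))_j$ wherever $f_X>0$. At the fixed $x$ at which $\eta$ is defined we have $f_X(x)>0$; once each $f_{X_j}$ is shown to be continuous at $x$ along configurations of cardinality $\#x$, the sum $f_X$ is likewise continuous there, hence stays positive on a neighbourhood, and the quotient defining $\eta$ is continuous. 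Thus it suffices to make $|f_{X_j}(x)-f_{X_j}(y)|$ small for $\#y=\#x$ and $d_H(x,y)$ small, and then absorb the resulting modulus into $\epsilon/2$.

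Write $x=\{\xi_1,\dots,\xi_k\}$; as a finite subset of $S$ its points are distinct and positively separated, so $\rho_0=\min_{i\neq i'}\rho(\xi_i,\xi_{i'})>0$. The crux is a pigeonhole argument producing a matching. Fix $\delta<\rho_0/2$, so that the closed balls $B_\rho(\xi_i,\delta)$ are pairwise disjoint. If $\#y=k$ and $d_H(x,y)\le\delta$, then every point of $y$ lies within $\delta$ of some $\xi_i$ and so falls in exactly one of these balls, while every $\xi_i$ has a point of $y$ within $\delta$ and so each ball contains at least one point of $y$; counting $k$ points among $k$ nonempty disjoint balls shows each ball contains exactly one, which yields a bijection $\pi$ with $\rho(\xi_i,\theta_{\pi(i)})\le\delta$ for every $i$.

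With the matching in hand I would finish using the density formula (\ref{eqdens2}): $f_{X_j}(x)=\exp[\nu(S)-\mu_j(S)]\prod_{i}\lambda_j(\xi_i)$, and the same expression for $y$ with the product reindexed by the permutation $\pi$. Because $S$ is compact, each $\lambda_j$ is uniformly continuous and bounded on $S$, so the matched factors $\lambda_j(\theta_{\pi(i)})$ approach $\lambda_j(\xi_i)$ uniformly as $\delta\to0$; a telescoping estimate on the finite products then makes $|f_{X_j}(x)-f_{X_j}(y)|$, and hence $|\eta(x)-\eta(y)|$, smaller than $\epsilon/2$ once $\delta=\delta(x,\epsilon)$ is chosen small enough.

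The step I expect to be the main obstacle is the matching: proximity in Hausdorff distance does not, by itself, pair the points of $x$ with those of $y$, and the bare computation that writes $y=\{\theta_1,\dots,\theta_k\}$ and compares $\prod_i\lambda_j(\xi_i)$ with $\prod_i\lambda_j(\theta_i)$ tacitly assumes such a pairing. Supplying it rigorously is exactly what forces $\delta$ to depend on the separation constant $\rho_0$ of the particular configuration $x$ (so $\delta$ cannot be taken uniform in $x$), and is where the equal-cardinality hypothesis is essential. The remaining ingredients are routine, and I note that the no-atom assumption on $\nu$ is not needed for this lemma; it enters the surrounding proof of Proposition \ref{besifuerte} only when configurations $y$ of cardinality different from $\#x$ must be controlled.
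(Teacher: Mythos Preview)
Your proposal is correct and follows essentially the same approach as the paper: reduce the continuity of $\eta$ via (\ref{funcreg}) to that of the densities $f_{X_j}$, then use the product formula (\ref{eqdens2}) together with the continuity of the $\lambda_j$ to make $|f_{X_j}(x)-f_{X_j}(y)|$ small. Your explicit pigeonhole matching argument (forcing $\delta<\rho_0/2$ so that the balls $B_\rho(\xi_i,\delta)$ are disjoint and each captures exactly one point of $y$) rigorously justifies the pairing $\xi_i\leftrightarrow\theta_i$ that the paper's proof simply writes down without comment; in this respect your version is more complete.
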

Now let us prove the Proposition. Since $|\eta|\le 1 $, for $r<\delta$ from Lemma \ref{extra},
\begin{align*} 
 \int_{B_{d_H} (x,r)} | \eta(y) - \eta(x) | dP_X (y) &\le \frac{ \epsilon}{2} P_X (B_{d_H} (x,r)) \\  &\hspace{1cm}+   \int_{B_{d_H} (x,r)} | \eta(y) - \eta(x) | \mathbb{I}_{\{\# y\not =\#x\}}dP_X (y) \nonumber\\
 &\le \frac{ \epsilon}{2} P_X (B_{d_H} (x,r)) \\  &\hspace{1cm}+   2   \int_{B_{d_H} (x,r)}  \mathbb{I}_{\{\# y\not =\#x\}}dP_X (y). \nonumber
\end{align*}
Now, if $x=(\xi_1,\dots,\xi_{s_x})$, taking $r<r_1 (x,s_x)=\min_{i\neq j}\rho(\xi_i,\xi_j)/2$ the balls $B_\rho (\xi_i,r) $ are disjoint. Let us take
$y \in B_{d_H}(x,r)$ with $y=\{\theta_1,\dots,\theta_{s_y}\}$, $s_y \neq s_x$. 
Suppose there exists $j=1,\dots,s_y$ such that for all $i=1,\dots,s_x$, $d_\rho (\xi_i, \theta_j) \ge r$, then $d_H (x,y) \ge r$ that is a contradiction. Therefore
for each $j=1,\dots,s_y $ there exists $i=1,\dots,s_x$ with $d_\rho (\xi_i,\theta_j) < r $ and as a consequence
$\#(y\cap B_\rho(\xi_i,r))\geq 1$ for all $i=1,\dots,s_x$. Since $B_\rho (\xi_i,r) $  are disjoint we get $\#y \ge \#x$.  
As a consequence,
\begin{align*}
\int_{B_{d_H} (x,r)} | \eta(y) - \eta(x) | dP_X (y)&\le\frac{ \epsilon}{2} P_X (B_{d_H} (x,r)) \\ &\hspace{1cm}+  2   P_X \left(  {B_{d_H} (x,r)}   \cap {\{\# y  >\#x\}} \right).
\end{align*}
Observe that, 
\begin{eqnarray*}
{B_{d_H} (x,r)}   \cap {\{y: \# y  >\#x\}} & =&\{ y: \exists  i=1,\dots,s_x \hbox{  with } \# ( B_\rho (\xi_i,r) \cap y) > 1 \}\\
& \subset &\cup_{i=1}^{s_x} \{ y:   \# ( B_\rho (\xi_i,r) \cap y) > 1 \}. 
\end{eqnarray*}
Using that $N_{ B_\rho (x_i, r)}$ is a real valued random variable with Poisson distribution with parameter $ \mu (B_\rho (x_i, r))$ we have
\begin{eqnarray*}
P_X (\{ y:   \# ( B_\rho (\xi_i,r) \cap y) > 1 \}) &=& P_X ( N_{ B_\rho (\xi_i, r)} >1)\\
&\le & \frac{1}{2} \mu^2 (B_\rho (\xi_i,r)).
\end{eqnarray*}
Then, since the balls $\{B_\rho (\xi_i,r)\}_{i=1,\dots,s_x}$ are mutually disjoint, we get
\begin{eqnarray}\label{otra}
 P_X \left(  {B_{d_H} (x,r)}   \cap {\{\# y  >\#x\}} \right) &\le & \frac{1}{2}\sum_{i=1}^{s_x} \mu^2 (B_\rho (\xi_i,r)).
 \end{eqnarray}
Finally, since $\nu$ does not have atoms and $\lambda$ is locally integrable, by Radon-Nikodym's theorem, we have that for all $\epsilon>0$ there exists $r_2=r_2 (x,s_x)$ such that, if $r \le r_2 (x,s_x)$,
\[ 
\frac{1}{2}\sum_{i=1}^{s_x} \mu^2 (B_\rho (\xi_i,r)) < \epsilon/2.
\]
Taking $ r \le \min \{ r_1, r_2\} $ we get using (\ref{otra}) that
\begin{eqnarray*}
\int_{B_{d_H} (x,r)} | \eta(y) - \eta(x) | dP_X (y) &\le &\epsilon P_X (B_{d_H} (x,r)).
\end{eqnarray*}
\end{proof}
\begin{proof}[Proof of Proposition \ref{separ2}] 
The fact that $(S^\infty,d)$ is separable is a direct consequence of Proposition \ref{separ}. 
\end{proof}
\begin{proof}[Proof of Proposition \ref{besi2}] 
Let us take a point $x\in S^\infty$, since $S$ is bounded we know that $\# x=k<\infty$. Let us denote $x=\{\xi_1,\dots,\xi_k\}$, if $y\in B_d(x,\epsilon_0)$, (being $\epsilon_0$ as in Definition \ref{dist} 4.), then $d_0(x,y)<\epsilon_0$ 
and then $\#y=\#x$. As a consequence, for all $y\in B_d(x,\epsilon_0)$, if we denote $y=\{\theta_1,\dots,\theta_k\}$ (where $\theta_i$ is the nearest point to $\xi_i$ with respect to $\rho$),
$$|f(x)-f(y)|=\exp\Big[ \nu(S)-\mu_1(S)\Big]\Big|\prod_{i=1}^k\lambda(\xi_i)-\prod_{i=1}^k\lambda(\theta_i)\Big|.$$
Since $\lambda$ is continuous for all $\epsilon>0$ there exists $\delta=\delta(k,\epsilon)$ such that for all $y\in B_d(x,\delta)$, $|f(x)-f(y)|<\epsilon$. Now the continuity of $\eta$ follows from the continuity of $f$ and equation (\ref{funcreg}).
\end{proof}
\begin{proof} [Proof of Proposition \ref{diminfiprop}]
We will prove that, if there exists $r_0>0$ such that for all $\zeta\in S$, and all $r<r_0$, $\partial B(\zeta,r)$ contains two points  $\pi^1,\pi^2$ such that $\rho(\pi_1,\pi_2)=2r$ and therefore the space $(S^\infty,d\big)$ is not finite dimensional. Without lost of generality let us assume that $diam(S)=1$. It is enough to find, for all $n>0$ a point $x\in S^\infty$ and a positive number $t$ such that $B_d(x,t)$ contains $n$ points $\{y_1,\dots,y_n\}$ in $S^\infty$ fulfilling the condition $d(y_i,y_j)>t$ for all $i\neq j$. Let us take $n$ different points $\{\xi_1,\dots,\xi_n\}\in S$. We define $x=\{\xi_1,\dots,\xi_n\}$ and $t=\min\big\{\min_{i\neq j}\rho(\xi_i,\xi_j),r_0\big\}$. 
For all $i=1,\dots,n$, there exists $\pi_i^1,\pi_i^2$ different points in $\partial B(\xi_i,2t/3)$ such that $\rho(\pi_i^1,\pi_i^2)=4t/3$.
We define 
\begin{align*}
y_1=& \{\pi_1^1,\dots,\pi_n^1\},\\
y_i=& \{\pi_1^1,\dots,\pi_{i-1}^1,\pi_i^2,\pi_{i+1}^1,\dots,\pi_n^1\} \quad i=2,\dots,n.
\end{align*}
Then $d(y_i,y_j)=\rho(\pi_i^2,\pi_i^1)=4t/3>t$ for all $i\neq j$ and $y_i\in B_d(x,t)$ for $i=1,\dots,n$.
\end{proof}

\begin{proof}[Sketch of proof of Proposition \ref{besifuerte2}]. 
Since  $U$ is a continuous function, it is easy to see that the result in Lemma \ref{extra} still holds, and then
\begin{equation*}
|\eta (x) - \eta (y) | < \epsilon/2, 
\end{equation*}
whenever $\# y = \#x$ and $y \in B_{d_H} (x,r)$, for all $r\le \delta$. Now proceeding as in the proof of Proposition \ref{besifuerte}, we can write,
\begin{align*}
\int_{B_{d_H} (x,r)} | \eta(y) - \eta(x) | dP_X (y)&\le\frac{ \epsilon}{2} P_X (B_{d_H} (x,r)) \\ &\hspace{1cm}+  2   P_X \left(  {B_{d_H} (x,r)}   \cap {\{\# y  >\#x\}} \right).
\end{align*}
Let us recall first that the probability of a configuration of $n$ points in $B\subset S$ is,
\begin{equation*}
\mathbb{P}(\#(X\cap B)=n)=\frac{e^{-\nu(S)}}{n!}\int_{B^n}f(x)d\nu(x_1)\dots d\nu(x_n).
\end{equation*}
From where it follows that
		$$P_X ( N_{ B_\rho (\xi_i, r)} >1)\leq \frac{f_1}{2} \nu^2 (B_\rho (\xi_i,r)),$$ 
		being $f_1=\max_{x\in S} f(x)$. Then the rest of the proofs follows bye using the same ideas as in Proposition \ref{besifuerte}.
	\end{proof}

\section*{Bibliography}

\end{document}